\newtheorem{theorem}{Theorem}[section]
\newtheorem{corollary}[theorem]{Corollary}
\newtheorem{proposition}[theorem]{Proposition}
\theoremstyle{definition}
\newtheorem{definition}[theorem]{Definition}
\theoremstyle{remark}
\newtheorem{remark}{Remark}
\begin{document}

\title{A fractional Borel-Pompeiu type formula for holomorphic functions of two complex variables}
\small{
\author
{Jos\'e Oscar Gonz\'alez-Cervantes$^{(1)}$ and Juan Bory-Reyes$^{(2)\footnote{corresponding author}}$}
\vskip 1truecm
\date{\small $^{(1)}$ Departamento de Matem\'aticas, ESFM-Instituto Polit\'ecnico Nacional. 07338, Ciudad M\'exico, M\'exico\\ Email: jogc200678@gmail.com\\$^{(2)}$ {SEPI, ESIME-Zacatenco-Instituto Polit\'ecnico Nacional. 07338, Ciudad M\'exico, M\'exico}\\Email: juanboryreyes@yahoo.com
}
\maketitle
\begin{abstract}
\small{
The present paper is a continuation of our work \cite{BG1}, where we introduced a fractional operator calculus related to a fractional $\psi-$Fueter operator in the one-dimensional Riemann–Liouville derivative sense in each direction of the quaternionic structure, that depends on an additional vector of complex parameters with fractional real parts. 

This allowed us also to study a pair of lower order fractional operators and prove the associated analogues of both Stokes and Borel-Pompieu formulas for holomorphic functions in two complex variables.}
\end{abstract} 
\noindent
\textbf{Keywords.} Quaternionic analysis; Fractional derivatives; Borel-Pompeiu\\ formula; Holomorphic functions of several complex variables.\\
\textbf{MSC2020:} 26A33, 30G35, 32A10, 35A08, 46S05.  

\section{Introduction} 
Fractional calculus, involving derivatives-integrals of arbitrary real or complex order, is the natural generalization of the classical calculus, which in the latter years became a well-suited tool by many researchers working in several branches of science and engineering.

The interest in the subject has been growing continuously during the last few decades because of numerous applications in diverse fields of science and engineering, see \cite{GM, KST, MABZ, OS, O, P, MR, T, SKM}. For a brief history and exposition of the fundamental theory of fractional calculus we refer the reader to \cite{Ro}.

The development of a fractional hyperholomorphic function theory represents a very recent topic of research, see \cite{CDOP, DM, FRV, KV, PBBB, V} and the references given there. In particular, the interest for considering fractional Laplace and Dirac Operator is devoted in \cite{Ba, Be, FV1, FV2}.

Nowadays, quaternionic analysis is regarded as a broadly accepted branch of classical analysis offering a successful generalization of complex holomorphic function theory, the most renowned examples are Sudbery's paper\cite{sudbery} and the books \cite{GS1, GS2, KS, K}. It relies heavily on results on functions defined on domains in $\mathbb R^4$ with values in the skew field of real quaternions $\mathbb H$ associated to a generalized Cauchy-Riemann operator (the so-called $\psi-$Fueter operator) by using a general orthonormal basis in $\mathbb R^4$ (to be named structural set) $\psi$ of $\mathbb H^4$, see, e.g., \cite{Na, No1, No2}. 

It is well-known that each of the following four theories: holomorphic functions of two complex variables, vector analysis, time-harmonic electromagnetic, and time-harmonic spinor fields can be embedded into the quaternionic analysis associated to the $\psi-$Fueter operator (see \cite{S1} for more details). Such embeddings are rather fruitful for these conventional theories, which allows both to obtain new results for them and to give a new explanation or interpretation to the previously known facts. This point of view can be expected to be quite promising and far-reaching,  hence the interest of possible extension to the fractional setting. A first step in this direction will be taken here as the main and novel interest of this work.

In \cite{BG1}, we developed a fractional operator calculus related to a fractional $\psi-$Fueter operator in the one-dimensional Riemann–Liouville derivative sense, in each direction of the quaternionic structure, that depends on additional vector of complex parameters with fractional real parts. This fractional construction allows us to introduce a pair of lower order fractional operators and a reproducing function, as a weak analog of the Cauchy kernel. As a main results, we prove the associated analogues of both Stokes and Borel-Pompieu formulas for holomorphic functions in two complex variables. The obtained results exhibit an interesting and original approach for a fractional calculus of holomorphic functions of two complex variables.

\section{Preliminaries} 
This section contains a brief summary of basic definitions and facts on fractional calculus and the theory of $\psi$-hyperholomorphic functions.  

\subsection{Standard definition of and results on Riemann-Liouville fractional integro-differential operators}
There are different definitions of fractional derivatives. One of the most popular (even though it has disadvantages for applications to real world problems) is the Riemann–Liouville derivative (see, e.g., \cite{MR}). For completeness, we recall the key definitions and results on Riemann-Liouville fractional integro-differential operators

Given $\alpha\in \mathbb C$ with $\Re \alpha> 0$, let us recall that the Riemann-Liouville integrals of order $\alpha$ of a  $f  \in L^1([a, b], \mathbb R)$, with  $-\infty <a  < b< \infty$, on the left and on the right, are defined  by 
$$({\bf I}_{a^+}^{\alpha} f)(x) := \frac{1}{\Gamma(\alpha)} \int_a^x \frac{f(\tau)}{(x-\tau)^{1-\alpha}} d\tau, \quad \textrm{with}  \quad x > a$$
and
$$({\bf I}_{b^-}^{\alpha} f)(x) := \frac{1}{\Gamma(\alpha)} \int_x^b \frac{f(\tau)}{(\tau-x)^{1-\alpha}} d\tau, \quad \textrm{with}  \quad x < b,$$
respectively. 

What is more, let $n=[\Re \alpha]+1$, where $[\cdot]$ means the integer part of $\cdot$ and $f\in AC^n([a, b], \mathbb R)$; i.e., the class of functions $f$ which are continuously differentiable on the segment $[a, b]$ up to the order $n-1$ and $f^{(n-1)}$ is supposed to be absolutely continuous on $[a, b]$. The fractional derivatives in the Riemann-Liouville sense, on the left and on the right, are defined by 
\begin{align}\label{FracDer} 
(D _{a^+}^{\alpha} f)(x):= \frac{d}{dx^n} \left[ ({\bf I}_{a^+}^{n-\alpha} f)(x)\right]
\end{align}
and
\begin{align} \label{FracDer1}
(D _{b^-}^{\alpha} f)(x):= (-1)^n\frac{d}{dx^n}\left[({\bf I}_{b^-}^{n-\alpha}f)(x)\right] 
\end{align}
respectively. It is worth noting that the derivatives in (\ref{FracDer}), (\ref{FracDer1}) exist for $f\in AC^n([a, b], \mathbb R)$. Fractional Riemann-Liouville integral and derivative are linear operators.

Fundamental theorem for Riemann-Liouville fractional calculus \cite{CC} shows that 
\begin{align}\label{FundTheorem}
(D_{a^+}^{\alpha} {\bf I}_{a^+}^{\alpha}f)(x)=f(x) \quad  \textrm{and} \quad (D _{b^-}^{\alpha}  {\bf I}_{b^-}^{\alpha} f)(x) = f(x).
\end{align}

Let us mention an important property of the fractional Riemann-Liouville integral and derivative, see \cite[pag. 1835]{VTRMB}.
\begin{proposition}
\begin{equation}\label{cte}
( D _{a^+}^{\alpha} 1)(x)=\frac{(x-a)^{-\alpha}}{\Gamma[1-\alpha]}  , \quad \forall   x\in [a, b].
\end{equation}
\end{proposition}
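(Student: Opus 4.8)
The plan is to compute $(D_{a^+}^{\alpha}1)(x)$ directly from the definition \eqref{FracDer}, which is legitimate because the constant function $1$ obviously belongs to $AC^n([a,b],\mathbb{R})$. First I would evaluate the Riemann--Liouville integral of order $n-\alpha$ of the constant: writing $\beta:=n-\alpha$ (so that $\Re\beta>0$) and using the substitution $u=x-\tau$ one obtains
\[
({\bf I}_{a^+}^{\,\beta}1)(x)=\frac{1}{\Gamma(\beta)}\int_a^x (x-\tau)^{\beta-1}\,d\tau=\frac{1}{\Gamma(\beta)}\cdot\frac{(x-a)^{\beta}}{\beta}=\frac{(x-a)^{\,n-\alpha}}{\Gamma(n-\alpha+1)},
\]
where the last equality uses the functional equation $\beta\,\Gamma(\beta)=\Gamma(\beta+1)$.

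Next I would apply the classical derivative $\dfrac{d^n}{dx^n}$ to the power $(x-a)^{n-\alpha}$. Since $\Re(n-\alpha)$ is in general not an integer, differentiating $n$ times produces a falling-factorial coefficient,
\[
\frac{d^n}{dx^n}(x-a)^{n-\alpha}=(n-\alpha)(n-\alpha-1)\cdots(1-\alpha)\,(x-a)^{-\alpha}=\frac{\Gamma(n-\alpha+1)}{\Gamma(1-\alpha)}\,(x-a)^{-\alpha},
\]
and combining this with the previous display cancels the factor $\Gamma(n-\alpha+1)$ and yields exactly $(x-a)^{-\alpha}/\Gamma(1-\alpha)$ for every $x\in[a,b]$ (with the value at the right endpoint understood as the one-sided limit).

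The only mildly delicate step, and the one I would treat most carefully, is the second identity above: rewriting the product $(n-\alpha)(n-\alpha-1)\cdots(1-\alpha)$ as the quotient of Gamma values, which is just the functional equation $\Gamma(z+1)=z\,\Gamma(z)$ iterated $n$ times. Beyond this bookkeeping there is no analytic obstacle, since every manipulation takes place either under an absolutely convergent integral or on a smooth power function, so term-by-term integration and repeated differentiation are fully justified. I would also note in passing that the result is consistent with \eqref{FundTheorem} and is independent of the choice of $n=[\Re\alpha]+1$, a remark that can be verified by repeating the computation with any integer $n'>\Re\alpha$.
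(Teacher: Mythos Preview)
Your direct computation is correct and is the standard textbook verification of this identity: evaluate the fractional integral of the constant, differentiate the resulting power, and collapse the falling factorial via $\Gamma(z+1)=z\,\Gamma(z)$. There is nothing to compare against, however, because the paper does not prove this proposition at all; it is simply quoted as a known formula from \cite[p.~1835]{VTRMB}. Your argument thus supplies a proof that the paper omits. One minor slip: your parenthetical about the ``right endpoint'' is misplaced---the expression is perfectly regular at $x=b$; the only delicate point is $x=a$, where $(x-a)^{-\alpha}$ is singular (and the paper's ``$\forall\,x\in[a,b]$'' is itself a bit loose on this).
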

\subsection{Rudiments of $\psi$-hyperholomorphic functions}
Consider the skew field of real quaternions $\mathbb H$ with its basic elements $1, i, j, k$. Thus any element $q$ from $\mathbb H$ is of the form $q=x_0+x_{1}i+x_{2}j+x_{3}k$, $x_{s}\in \mathbb R, s= 0,1,2,3$, where $i^{2}=j^{2}=k^{2}=-1$ and satisfy the following multiplication rules: 
$$i\,j=-j\,i=k;\   j\,k=-j\,k=i\  \mbox{and}\   k\,i=-k\,i=j.$$

Let us denote by $\textsf{i}$ the imaginary unit of the complex plane $\mathbb C({\textsf{i}})$. By definition, $\textsf{i}$ commutes
with all the quaternionic imaginary units $i, j, k$. We will identify $\textsf{i}$ with $i$ when no confusion can arise. 

For $q\in \mathbb H$ we define the quaternionic conjugation mapping: 
$$q\rightarrow {\overline q}:=x_0-x_{1}i-x_{2}j-x_{3}k.$$ 
In this way it is easy seen that $q\,{\overline q}={\overline q}\,q=|q|^{2}$ and  $\overline {pq}={\overline q}\,\,{\overline p}$ for $p,q\in \mathbb H$. We have for $a\in \mathbb C({i})=\{x+iy  \ \mid \ x,y\in\mathbb R\} $ and $\overline a$ its complex conjugate, that $a\,j=j\,\overline a$.

We embed the usual complex linear space $\mathbb C({i})^2$ into $\mathbb H$ by means of the mapping that associates the pair
$$(z_1, z_2)= (x_0+{i}x_{1}, x_{2}+{i}x_{3})$$
with the quaternion
$$q=z_{1}+z_{2}j,\ \  z_{1},z_{2}\in \mathbb C({i}).$$
The above embedding means that the set of quaternions of the form $q=z_{1}+z_{2}j$ is endowed both with a component-wise addition and with the associative multiplication is then another way of stating $\mathbb H$. In particular, two quaternions $q=z_1+z_2j$ and $\xi=\zeta_1+\zeta_2j$ are multiplied according the rule:
$$q\,\xi=(z_1\zeta_1-z_2\bar\zeta_2)+(z_1\zeta_2+z_2\bar\zeta_1)j.$$ 
The quaternion conjugation gives: $\overline{z_1+z_2j}:= \bar z_1-z_2j$ and $q\,{\overline q}={\overline q}\, q=|z_{1}|^{2}+|z_{2}|^{2}$. 

We will identify $\mathbb H$ with $\mathbb C({i})^2$ (or $\mathbb R^4$) by mean of the mapping
\begin{equation} \label{mapping}
x_0+x_{1}i+x_{2}j+x_{3}k\rightarrow (x_0+ix_1)+(x_2+ix_3)j \rightarrow (x_0,x_1,x_2,x_3).
\end{equation}

The set of complex quaternions is given by  
$$\mathbb H (\mathbb C(\textsf{i})) =\{q=q_1+ \textsf{i}  q_2 \ \mid \ q_1,q_2 \in \mathbb H\}.$$
The main difference to the real quaternions is that not all non-zero complex quaternions are invertible. There are zero-divisors. 

By $\mathbb C ( i)(\mathbb C(\textsf{i}))$ we mean 
$$ \mathbb C ( i) (\mathbb C(\textsf{i})) = \{w_1+  \textsf{i} w_2 \ \mid \ w_1, w_2 \in \mathbb C({i})\}.$$

Let us recall that $\mathbb H$ is embedded in $\mathbb H(\mathbb C (\textsf{i}))$ as follows:
$$\mathbb H =\{q=q_1+ \textsf{i} q_2 \  \in \mathbb H(\mathbb C(\textsf{i}))  \ \mid \  q_1,q_2 \in \mathbb H \ \ \textrm{and} \ \ q_2=0\}.$$

Quaternionic analysis is regarded as a broadly accepted branch of classical analysis offering a successful generalization of complex holomorphic function theory. It relies heavily on results on functions defined on domains in $\mathbb R^4\cong\mathbb C({i})^2$ with values in $\mathbb H$ or $\mathbb H (\mathbb C(\textsf{i}))$ associated to a generalized Cauchy-Riemann operator by using a general orthonormal basis in $\mathbb R^4$, a so-called structural set of $\mathbb H^4$. The concept of structural set goes back as far as \cite{Na, No1, No2}. 

This theory is centered around the concept of $\psi-$hyperholomorphic functions, i.e., the collection of all null solutions of the so-called $\psi-$Fueter operator associated to the structural set $\psi=\{1, \psi_1, \psi_2, \psi_3\}\in \mathbb H^4$:
\[{}^\psi\mathcal D= \frac{\partial}{\partial x_0} + \psi_1 \frac{\partial}{\partial x_1} +  \psi_2 \frac{\partial}{\partial x_2} + \psi_3 \frac{\partial}{\partial q^3},\]
which represents a natural generalization of the complex Cauchy-Riemann operator, see \cite{MS, S1, S2, SV1, SV2} for more details.

The theory of $\psi$-hyperholomorphic functions offers yet a particularly suitable framework for the treatment of some conventional theories \cite{S1}. In particular, it is shown that holomorphic functions of two complex variables can be embedded into one of the above $\psi$-hyperholomorphic ones. 

Let establish a one-to-one correspondence between $\psi-$hyperholomorphic functions $f$ in $\Omega\subset\mathbb R^4\cong\mathbb C({i})^2$ and a pair of complex-valued functions:  
\[
f=f^0+f^{1}i+f^{2}j+f^{3}k=(f^0+if^1)+(f^2+if^3)j=f_1+f_2 j, 
\]
where $f^s, s= 0,1,2,3,$ are $\mathbb R$-valued functions. Properties as continuity, differentiability, integrability and so on, which as ascribed to $f$ have to be posed by all components $f^s$. We will follow standard notation, for example $C^{1}(\Omega, \mathbb H)$ denotes the set of continuously differentiable $\mathbb H$-valued functions defined in $\Omega$. 

Given $0\leq \theta < 2\pi$, set $\psi=\{1, i, i e^{i\theta} j, e^{i\theta} j\}$  and introduce the corresponding $\psi-$Fueter operator
\[{}^\theta\mathcal D:= \frac{\partial}{\partial x_0} + i \frac{\partial}{\partial x_1} + i e^{i\theta}j \frac{\partial}{\partial x_2} +  e^{i\theta} j \frac{\partial}{\partial x_3},\]
to be written in complex form  
\[{}^\theta\mathcal D=2\left\{\frac{\partial}{\partial \bar z_1} + i
e^{i\theta}\frac{\partial}{\partial z_2}j\right\} =2\left\{
\frac{\partial}{\partial \bar z_1} + i
e^{i\theta}j\frac{\partial}{\partial \bar z_2}\right\}
\]
defined on $C^{1}(\Omega, \mathbb H)$. 

For ${}^\theta\mathcal D $ acting on the right we will denote it by ${}^\theta\mathcal D_r$.  These operators operators decompose the four-dimensional Laplace operator:
$${}^\theta\mathcal D\circ \overline{{}^\theta\mathcal D}=\overline{ {}^\theta\mathcal D} \circ {}^\theta\mathcal D =
\overline{{}^\theta\mathcal D_r }\circ {}^\theta\mathcal D_r =
{}^\theta\mathcal D_r \circ \overline{{}^\theta\mathcal
D_r} =\bigtriangleup_{\mathbb R^4}.$$
The elements of the sets ${}^\theta\mathfrak M(\Omega, \mathbb H)=ker \ {}^\theta\mathcal D$ and ${}^\theta \mathfrak M_r(\Omega, \mathbb H)= ker \ {}^\theta\mathcal D_r$ are called left (respectively right) $\theta$-hyperholomorphic functions on $\Omega$.

The spaces ${}^\theta\mathfrak M$ and ${}^\theta \mathfrak M_r$ are right-quaternionic Banach modules, although they are also real linear spaces (from both sides). This has the disadvantage that ${}^\theta\mathcal D_r$ acts on them only as a real linear operator, not a quaternionic linear operator, but in our context it will be good enough since the principal operator under consideration is ${}^\theta\mathcal D$.
\\
In addition,  $f=f_1+f_2j\in {}^\theta\mathfrak M(\Omega, \mathbb H)$, where $f_1,f_2: \Omega\to \mathbb C(i)$, if and only if 
\begin{align}\label{equa111}
\left\{
\begin{array}{l}
\displaystyle{\frac{\partial f_1}{\partial \bar z_1}  =  
ie^{i\theta}\overline{\frac{\partial f_2
}{\partial \bar z_2}}}, \\
 \\
\displaystyle{\frac{\partial f_1}{\partial \bar z_2}  =  
-ie^{i\theta}\overline{\frac{\partial f_2}{\partial \bar z_1}}}.    
\end{array} \right.
\end{align}
Let us consider the following spaces of holomorphic maps:
\[
Hol(\Omega,\mathbb C(i))=\{f\in C^{1}(\Omega, \mathbb C(i)): \displaystyle\frac{\partial f}{\partial \bar z_1}=0, \displaystyle\frac{\partial f}{\partial \bar z_2}=0\}
\]
and more generally
\[
Hol(\Omega,\mathbb C(i)^2)=\{f=(f_1, f_2): f_1, f_2 \in Hol(\Omega,\mathbb C(i))\}.
\]
The relation
\[\displaystyle{Hol(\Omega,\mathbb C(i)^2)=\bigcap_{0\leq \theta < 2\pi} {}^\theta\mathfrak M(\Omega)}\]
can be found in \cite{MS}.

Given $q, \xi \in \mathbb H$, we define 
$$\langle q,\xi \rangle_{\theta}=\sum_{k=0}^3 x_k y_k.$$ 

Let us introduce the temporary notation $q_{\theta}$ and $\xi_{\theta}$ for $q$ and $\xi$.
$$q_{\theta}=x_0 +x_1 i +x_2 i e^{i\theta} j+ x_3 e^{i\theta} j= z_1+ ie^{i\theta}j z_2 $$  
and 
$$\xi_{\theta} =y_0 +y_1 i +y_2 i e^{i\theta} j+ y_3 e^{i\theta}j=\zeta_1+ ie^{i\theta}j \zeta_2,$$ 
where $z_1=x_0+ix_1, z_2=x_2+ix_3,\zeta_1=y_0+iy_1, \zeta_2=y_2+iy_3$. 

The mappings $q_{\theta}\to (z_1,z_2)$ and $\xi_{\theta}\to (\zeta_1,\zeta_2)$ establish the following  operations in $\mathbb C(i)^2$, according to the quaternionic algebraic structure.  
\begin{itemize}
\item $(z_1, z_2)  \pm (\zeta_1 , \zeta_2)=  (z_1 \pm   \zeta_1, z_2\pm \zeta_2),$ 
\item $(z_1, z_2)  (\zeta _1, \zeta _2)=(z_1 \zeta _1- \bar{z}_2  \zeta _2,  \bar z_1 \zeta_2+ z_2 \zeta _1).$
\item $\overline{ (z_1, z_2)}=(\bar{z}_1 ,-z_2)$.
\item $|z_1+ z_2j|^2:=|z_1|_{\mathbb C(i)}^2+|z_2|_{\mathbb C(i)}^2=(z_1, z_2)(\bar z_1 , -z_2)$. 
\item Topology in $\mathbb C(i)^2$ is determined by the metric $d(q_{\theta}, \xi_{\theta}):= |q_{\theta}-\xi_{\theta}|$ for $q_{\theta},\xi_{\theta}\in\mathbb C(i)^2$.
\end{itemize}
Unless otherwise stated we continue to write $q$ for $q_{\theta}$ and $\xi$ for $\xi_{\theta}$.

Let $\Omega\subset \mathbb C(i)^2$ be a bounded domain with its boundary $\partial \Omega$ a compact $3-$dimensional sufficiently smooth hypersurface (co-dimension 1 manifold). The following formulas can be found in many sources (see, for example \cite{GS, MS}).

Given $f, g \in C^1(\overline{\Omega},\mathbb H)$ we have the quaternionic Stokes formulas
\begin{equation} \label{Dif-stokes} d(g\sigma^{\theta}_{\xi}  f) = 
  g \ {}^{{     \theta   }}\mathcal  D[f]+ \  \mathcal D^{{     \theta   }}[g] f ,
	\end{equation}
	\begin{equation}	 \label{Int-stokes}   \int_{\partial \Omega} g
	\sigma^{\theta}_{\xi}
	f     =       \int_{\Omega } \left( g    {}^\theta    \mathcal  D[f] +   \mathcal  D^{{     \theta   }}[g] f   \right)  d\mu, 
\end{equation}
and the quaternionic Borel-Pompieu formula  
\begin{align}  &  \int_{\partial \Omega }  ( K_{\theta   }(\xi - q )\sigma_{\xi
}^{\theta} f(\xi )  +  g(\xi )   \sigma_{\xi }^{\theta   } K_{\theta   }(\xi - q  ) ) \nonumber  \\ 
&  - 
\int_{\Omega} (K_{\theta   } ( \xi- q )
  {}^{\theta   }\mathcal D [f] ( \xi )  +    \mathcal  D^{{     \theta   }} [g] ( \xi)   K_{\theta   } ( \xi - q )
     )d\mu_{\xi}   \nonumber \\
		=  &    \label{ecua4}   \left\{ \begin{array}{ll}  f( q ) + g( q ) , &   q \in \Omega,  \\ 0 , &   q \in \mathbb H\setminus\overline{\Omega},    \end{array} \right. 
\end{align} 
where 
$$\displaystyle{ K_{\theta}(\xi -q ):= \frac{ 1}{2\pi^2
|\xi- q|^4} [(\bar\zeta_1- \bar z_1) - i e^{-i\theta}
(\bar\zeta_2-  \bar z_2)j ]}$$
is called $\theta$-hyperholomorphic Cauchy kernel and 
$$\sigma^{\theta}_{\xi}:=\frac{1}{2} [d \bar \xi_{[2]} \wedge d\xi - ie^{i\theta}d \xi_{[1]} \wedge d\bar \xi j],$$
where $d \xi_{[i]}$ denotes as usual $d y_0 \wedge d y_1 \wedge dy_2 \wedge d y_3$ omitting the factor $d y_i$, represents the $\mathbb H-$valued area form to $\partial \Omega$ and $d\mu_\xi$ stands for the $4-$dimensional volume element in $\Omega$. Here  $C^1(\overline{\Omega},\mathbb H)$ denotes the subclass of functions which can be extended smoothly to an open set containing the closure $\overline{\Omega}$.

From the above we can see that the $\theta$-hyperholomorphic Cauchy kernel generates the following important operator.
\begin{equation}\label{e3}
 {}^\theta T [f](q):=\int_{\Omega} K_\theta(\xi-q) f(\xi) d\mu_\xi
\end{equation}
on $\mathcal L_2(\Omega,\mathbb H)\cup C(\Omega,\mathbb H)$ and meets ${}^\theta\mathcal D  \circ  {}^\theta T  = I$. This can be found in \cite{GM, MS, SV1, SV2}.

The elements of $\mathbb H$ are written in terms of the structural set $\psi$ hence those of $\mathbb H(\mathbb C(\textsf{i}))$ can be written as $q=\sum_{k=0 }^3 \psi_k q_k,$ where $q_k\in \mathbb C(\textsf{i})$.
 
\section{Main results}
We shall consider vector parameters $\vec{\alpha} := (\alpha_0, \alpha_1,\alpha_2,\alpha_3); \vec{\beta} := (\beta_0, \beta_1,\beta_2,\beta_3) \in\mathbb C(\textsf{i})^4$, under the condition that $0<\Re\alpha_{\ell}; \Re\beta_{\ell}<1$, for $\ell=0,1,2,3$.
\subsection{Fractional Cauchy-Riemann system of order $\vec{\alpha}$}
\begin{definition}
Given  $a=(a_1,a_2)$, $b=( 
b_1,b_2)  \in \mathbb C(i)^2$ such that $\Re a_k< \Re  b_k$ and  $\Im a_k< \Im  b_k$  for  $k=1,2$, denote the rectangle   
\begin{align*}  {J_a^b }:= &  \{  (z_1, z_2) \in \mathbb C(i)^2 \ \mid \ \Re a_k< \Re z_k < \Re b_k, \  
 \Im a_k< \Im z_k < \Im b_k , \ k=0,1,2,3\} \\
 = & (\Re a_1,  \Re b_1 ) \times (\Im a_1,  \Im b_1 ) \times (\Re a_2,  \Re b_2 ) \times (\Im a_2,  \Im b_2 ) ,
\end{align*}
and define $m(J_a^b):=( \Re b_1 -\Re a_1) ( \Im b_1-\Im a_1)( \Re b_2-\Re a_2)(\Im b_2-\Im a_2)$. 
\end{definition}

\begin{remark} 
Let $\vec{\alpha} = (\alpha_0, \alpha_1,\alpha_2,\alpha_3) \in\mathbb C(\textsf{i})^4$ such that $0< \Re \alpha_{\ell}<1$ for $\ell=0,1,2,3$  and $f=f_1 + f_2 j \in AC^1(J_a^b,\mathbb H)$; i.e.,  $f_1,f_2 \in AC^1(J_a^b,\mathbb C(i))$, or equivalently, the real and the imaginary components of $f_1$ and $f_2$  belong to $AC^1(J_a^b,\mathbb R)$.

Given $q=(z_1, z_2)\in J_a^b$ and $f= f_1 + f_2 j \in AC^1(J_a^b,\mathbb H)$, we can certainly write $q\cong q_{\theta}=z_1+ie^{i\theta}j z_2$ and     
$f_n (q)\cong f_n (z_1, z_2)$ for $n=1,2$. Note that this is in agreement with our previously introduced terminology.

In order to define fractional derivatives in the Riemann-Liouville sense of the mappings 
\begin{align*}
\Re z_1   \mapsto \Re f_1 ( \Re z_1 + i \Im \zeta_1 , \zeta_2 ), \ \ 
\Im z_1 \mapsto \Im f_1( \Re \zeta_1 + \Im z_1, \zeta_2 ), \\ 
\Re z_2   \mapsto \Re f_2 ( \zeta_1, \Re z_2 + i \Im \zeta_2 ), \ \ 
\Im z_2 \mapsto \Im f_2(  \zeta_1, \Re \zeta_2 + \Im z_2 ), 
\end{align*}
we assume $q,\xi\in J_a^b$, where $\xi$ is a fix. Indeed, they are given by the following $\mathbb C(\textsf{i})$-valued functions:
\begin{align*}
\Re z_1   \mapsto D_{\Re {a_1}^+}^{\alpha_0}\Re f_1 ( \Re z_1 + i \Im \zeta_1 , \zeta_2 ), \ \ 
\Im z_1 \mapsto  D_{\Im {a_1}^+}^{\alpha_1} \Im f_1( \Re \zeta_1 + \Im z_1, \zeta_2 ) ,\\ 
\Re z_2   \mapsto D_{\Re {a_2}^+}^{\alpha_2}\Re f_2 ( \zeta_1, \Re z_2 + i \Im \zeta_2 ), \ \ 
\Im z_2 \mapsto D_{\Im {a_2}^+}^{\alpha_3} \Im f_2(  \zeta_1, \Re \zeta_2 + \Im z_2 ), 
\end{align*}
respectively.
	
Note that $D_{\Re {a_1}^+}^{\alpha_0}$ is applied in variable $\Re z_1$,  $D_{\Im {a_1}^+}^{\alpha_1}$ is applied in $\Im z_1$ and the same dependency  for the rest of the derivatives. 
\end{remark}

\begin{definition} Give  $\vec{\alpha} = (\alpha_0, \alpha_1,\alpha_2,\alpha_3) \in\mathbb C(\textsf{i})^4$ such that $0< \Re \alpha_{\ell}<1$ for $\ell=0,1,2,3$  and $g \in AC^1(J_a^b,\mathbb H)$, where $a=(a_1,a_2), b=(b_1,b_2) \in \mathbb C(i)^2$. The fractional $\theta$-Fueter operator of order $\vec{\alpha}$ is defined to be
\begin{align*} 
{}^{\theta}\mathfrak D_a^{\vec{\alpha}}[g] (\xi, q):= & ( D _{\Re a_1^+}^{{\alpha_0}}g)(
 \Re z_1 + i \Im \zeta_1 , \zeta_2 )  
    +  i   ( D _{\Im a_1^+}^{{\alpha_1}}g) ( \Re \zeta_1 + \Im z_1, \zeta_2 )
    \\
 &		+   i e^{i\theta}j    ( D _{\Re a_2^+}^{{\alpha_2}}g)( \zeta_1, \Re z_2 + i \Im \zeta_2 )         +   e^{i\theta}j     ( D _{\Im a_2^+}^{{\alpha_3}}g)( \zeta_1, \Re \zeta_2 + \Im z_2 ) ,
			\end{align*}
for $q, \xi \in J_a^b$. Note that $\xi$ is considered a fixed point since the integration and derivation variables are the real components of $q$. Moreover, the mapping 
$$q\mapsto {}^{\theta}\mathfrak D_a^{\vec{\alpha}}[f]( \xi, q )$$ 
is a $\mathbb H(\mathbb C(\textsf{i}))$-valued function.

The $\mathbb C(i)((\mathbb C(\textsf{i} ))$-components of the previous operators, denoted by  
\begin{align}\label{ComplexOper}  \mathfrak D_{a_1}^{(\alpha_0,\alpha_1)}[g](\xi,q) := &   D_{\Re {a_1}^+}^{\alpha_0} g  ( \Re z_1 + i \Im \zeta_1 , \zeta_2 ) + i D_{\Im {a}_1^+}^{\alpha_1}  g ( \Re \zeta_1 + \Im z_1, \zeta_2 ) , \nonumber\\
  \mathfrak D_{a_2}^{(\alpha_2,\alpha_3)}[g](\xi,q):= &   D_{\Re {a}_2^+}^{\alpha_2} g  ( \zeta_1, \Re z_2 + i \Im \zeta_2 ) + i  D_{\Im {a_2}^+}^{\alpha_3}  g (  \zeta_1, \Re \zeta_2 + \Im z_2), 
\end{align}
give
  \begin{align*} 
{}^{\theta}\mathfrak D_a^{\vec{\alpha}}[g] (\xi, q) =    \mathfrak D_{a_1}^{(\alpha_0,\alpha_1)}[g](\xi,q) 
 + i e^{i\theta}j    \mathfrak D_{a_2}^{(\alpha_2,\alpha_3)}[g](\xi,q).  
\end{align*}

Note that $\mathfrak D_{a_1}^{(\alpha_0,\alpha_1)}[g](\xi, \cdot)$ and $\mathfrak D_{a_2}^{(\alpha_2,\alpha_3)}[g](\xi, \cdot)$ are $\mathbb H( \mathbb C(\textsf{i}))$-valued functions. Particularly, for $g \in AC^1(J_a^b,\mathbb C(i))$ we will consider: 
\begin{align*} 
		\mathfrak I_{a_1}^{(\alpha_0, \alpha_1)}[g](\xi, q) := &  \frac{1}{\Gamma (\alpha_0)}
\int_{\Re a_1}^{\Re z_1}\frac{
\Re g  (\tau_0 + \Im \zeta_1, \zeta_2 )}{ ( \Re z_1 - \tau_0)^{1-\alpha_0}} 
d\tau_0    + \frac{1}{\Gamma (\alpha_1)}
\int_{\Im a_1}^{\Im z_1}\frac{
\Im g  ( \Re \zeta_1 + i \tau_1 , \zeta_2 )}{ ( \Im z_1 - \tau_1)^{1-\alpha_1}} 
d\tau_1  ,\\
 \mathfrak I_{a_2}^{(\alpha_2, \alpha_3)}[g](\xi, q) := & \frac{1}{\Gamma (\alpha_2)}
\int_{\Re a_2}^{\Re z_2}\frac{
-\sin( \theta) \Re g  (  \zeta_1 ,  \tau_2+ i \Im \zeta_2 ) + \cos (\theta ) \Im g  (  \zeta_1 ,  \tau_2+ i \Im \zeta_2 )  }{ ( \Re z_2 - \tau_2)^{1-\alpha_2}} 
d\tau_2    \\
 & + \frac{1}{\Gamma (\alpha_3)}
\int_{\Im a_2}^{\Im z_2}\frac{
\sin (\theta) \Im g  (   \zeta_1, \Re \zeta_2 + i \tau_3 )+ \cos (\theta) \Re g  (   \zeta_1, \Re \zeta_2 + i \tau_3 )}{ ( \Im z_2 - \tau_3)^{1-\alpha_3}} 
d\tau_3 . 
\end{align*}
The functions $\mathfrak I_{a_1}^{(\alpha_0, \alpha_1)}[g](\xi, \cdot)$ and $\mathfrak I_{a_2}^{(\alpha_2, \alpha_3)}[g](\xi, \cdot)$  are  $\mathbb C(i)( \mathbb C(\textsf{i}))$-valued. 

Also, for every $f=f_1+f_2 j \in AC^1(J_a^b,\mathbb H)$, i.e.,  $f_1,f_2 \in AC^1(J_a^b,\mathbb C(i))$, we define 
\begin{align*}
\mathfrak I_a^{\vec{\alpha}}[f](\xi, q) := & \mathfrak I_a^{(\alpha_0, \alpha_1)}[f_1](\xi, q) + 
\mathfrak I_a^{(\alpha_2, \alpha_3)}[f_2](\xi, q)j, \nonumber 
\end{align*}
and
\begin{align*}
\mathcal I_a^q [f] (\xi,q,\vec{\alpha})    
	 := &  \int_{J_a^q } \frac{1}{m(J_a^q)}  \left(\frac{f  
(\Re\tau_1+ i\Im \zeta_1, \zeta_2)  ( \Re (z_1-\tau_1) )^{ \alpha_0}}{\Gamma(\alpha_0) } \right. \nonumber \\
 & + \frac{f  (\Re\zeta_1+ i\Im \tau_1, \zeta_2)  ( \Im (z_1-\tau_1))^{ \alpha_1}}{\Gamma(\alpha_1) }   +  \frac{f  ( \zeta_1,  \Re \tau_2+ i \Im \zeta_2)  ( \Re (z_2-\tau_2))^{ \alpha_2}}{\Gamma(\alpha_2) } \nonumber\\
  & \left. +  \frac{f  ( \zeta_1,  \Re \zeta_2+ i \Im \tau_2)  ( \Im (z_2-\tau_2))^{ \alpha_3}}{\Gamma(\alpha_3) } \right)   d\mu_\tau,\nonumber  
\end{align*}
where  $\tau = (\tau_1, \tau_2)\in\mathbb C(i)^2$ and $d\mu_\tau$ is the differential  of four dimensional volume.
\end{definition}

\begin{remark}\label{remark001}
For every $f=f_1+f_2 j \in AC^1(J_a^b,\mathbb H)$, i.e.,  $f_1,f_2 \in AC^1(J_a^b,\mathbb C(i))$, we can see that 
\begin{align} \label{thetafracOper}
    \mathfrak D_{a}^{\vec \alpha}[f](\xi,q)  =  &  \left(	  
  \mathfrak D_{a_1}^{(\alpha_0,\alpha_1)}[f_1] (\xi,q)  -i e^{i\theta}  \overline{   \mathfrak D_{a_2}^
	{(\alpha_2,\alpha_3)}  [f_2]   (\xi,q)} \right)  \nonumber \\ 
 	 &   +  \left(
	\mathfrak D_{a_1}^{(\alpha_0,\alpha_1)} 
	[f_2 ] (\xi,q)
	+ i e^{i\theta}  \overline{  \mathfrak D_{a_2}^{(\alpha_2,\alpha_3)}
	  [f_1](\xi,q)}\right)j.  
\end{align}

Note that, mappings $q\mapsto {}^{\theta}\mathfrak D_a^{\vec{\alpha}}[f](\xi,q)$,  $q\mapsto \mathcal I_a^q [f] (\xi,q ,\vec{\alpha})$ and  $q\mapsto \mathfrak I_a^{\vec{\alpha}}[f](\xi, q)$ are $\mathbb H(\mathbb C(\textsf{i}))$-valued function.

Particularly, ${}^{\theta}\mathfrak D_a^{\vec{\alpha}}[f] (\xi,q) \mid_{q=\xi}$ can be considered as the fractional derivative of order $\vec \alpha$ at the point $\xi\in J_a^ b$. We shall write ${}^{\theta}\mathfrak D_a^{\vec{\alpha}}[f] (\xi)$ instead of ${}^{\theta}\mathfrak D_a^{\vec{\alpha}}[f] (\xi,q) \mid_{q=\xi}$.

Note that $\xi$ is considered fixed since the integration and derivation variables are the real components of $z_1 $ and $z_2$.  

The fractional differential equation ${}^{\theta}\mathfrak D_a^{\vec{\alpha}}[f(\xi,\cdot)]=0$ on $J_a^b$ is equivalent to the fractional Cauchy Riemann system 
\begin{align}\label{FracC-R-Equa} 
 \mathfrak D_{a_1}^{(\alpha_0,\alpha_1)}[f_1] (\xi,q)  =  & i e^{i\theta}  \overline{   \mathfrak D_{a_2}^{(\alpha_0,\alpha_1)}  [f_2]   (\xi,q)} , \nonumber \\
\mathfrak D_{a_2}^{(\alpha_2,\alpha_3)}
	  [f_1](\xi,q)
=& - i e^{i\theta}
 \overline{ 	\mathfrak D_{a_1}^{(\alpha_0,\alpha_1)} 
	[f_2 ] (\xi,q)}, 
\end{align} 
for every $q\in J_a^b$, which can be seen as a well analogous of system \eqref{equa111}. 
\end{remark}
We are interested in studying null-solutions of the operator ${}^{\theta}\mathfrak D_a^{\vec{\alpha}}$ and the fractional Riemann-Liouville integrals $\mathfrak I_a^{\vec{\alpha}}$ and $\mathcal I_a^q$. 

\begin{proposition}\label{propFRACD}
Let  $f=f_1+f_2 j \in AC^1(J_a^b,\mathbb H)$, $ \vec{\alpha} = (\alpha_0, \alpha_1,\alpha_2,\alpha_3) \in\mathbb C(\textsf{i})^4$ with $0< \Re \alpha_{\ell} <1$ for $\ell=0,1,2,3$ and set $q=(z_1,z_2) , \ \xi= (\zeta_1,\zeta_2) \in J_a^b$. The following identities hold:   
\begin{enumerate} 
\item ${}^{\theta}\mathfrak D_a^{\vec{\alpha}}[f](\xi,q) = {}^{\theta}  \mathcal D_{q} \circ  \mathcal I_a^q [f](\xi, q,\vec{\alpha}).$
\item 
\begin{align*} {}^{\theta}\mathfrak D_a^{\vec{\alpha}} \circ    \mathfrak I_a^{\vec{\alpha}}[f](\xi,q) = & \Re f_1( \Re z_1 + i \Im \zeta_1, \zeta_2) + i \Im f_1( \Re \zeta_1  + i \Im z_1, \zeta_2 ) \\ 
&+ ( \Re f_2(\zeta_1 , \Re z_2 + i \Im \zeta_2 ) + i \Im f_2(   \zeta_1 , \Re\zeta_2 + i \Im z_2))j. 
\end{align*}

Particularly, ${}^{\theta}\mathfrak D_a^{\vec{\alpha}} \circ  \mathfrak I_a^{\vec{\alpha}}[f](\xi,q)\mid_{q=\xi}= f(\xi)$. 

\item $ {}^{\bar \theta}\mathcal D_q\circ  {}^{\theta}\mathfrak D_a^{\vec{\alpha}}[f](\xi,q) =  (\Delta_{z_1} +  \Delta_{z_2}) \circ  \mathcal I_a^q [f](\xi, q,\vec{\alpha})$, 
where $\Delta_{z_1}$ and  $\Delta_{z_2} $ denotes the Laplacian operators in $\mathbb R^2$, corresponding to the real components of $z_1$ and $z_2$, respectively.
\item  If  the mapping $q\to \mathcal I_a^{q} [f](\xi, q,\vec{\alpha})$ belongs to $C^2(J_a^b, \mathbb H)$ for $\xi$ fix and set 
$\vec{\beta} = (\beta_0, \beta_1, \beta_2, \beta_3) \in\mathbb C(\textsf{i})^4$ with $0< \Re(\alpha_{\ell}+\beta_{\ell}) <1$ for $\ell=0,1,2,3$  then we have  
\begin{align*} {}^{\theta}\mathfrak D_a^{\vec{\alpha}} \circ  {}^{\theta}\mathfrak D_a^{\vec{\beta}}[f] (\xi,q) = &   D _{\Re a_1^+}^{{\alpha_0 + \beta_0}} f(\Re z_1 + i \Im \zeta_1 , \zeta_2) -   D _{\Im a_1^+}^{{\alpha_1 + \beta_1}} f(\Re \zeta_1 + i \Im z_1 , \zeta_2) \\
 & -  D _{\Re a_2^+}^{{\alpha_2 + \beta_2}} f( \zeta_1, \Re z_2 + i \Im \zeta_2 )-D _{\Im a_2^+}^{{\alpha_3 + \beta_3}} f(\zeta_1, \Re \zeta_2 + i\Im z_1)
\end{align*} 
and 
\begin{align*} \overline{ {}^{\theta}\mathfrak D_a^{\vec{\alpha}} } \circ  {}^{\theta}\mathfrak D_a^{\vec{\beta}}[f] (\xi,q) = &   D _{\Re a_1^+}^{{\alpha_0 + \beta_0}} f(\Re z_1 + i \Im \zeta_1 , \zeta_2) +  D _{\Im a_1^+}^{{\alpha_1 + \beta_1}} f(\Re \zeta_1 + i \Im z_1 , \zeta_2) \\
 & +  D _{\Re a_2^+}^{{\alpha_2 + \beta_2}} f( \zeta_1, \Re z_2 + i \Im \zeta_2)+D _{\Im a_2^+}^{{\alpha_3 + \beta_3}} f(\zeta_1, \Re \zeta_2 + i\Im z_1),
\end{align*}
where 
$$\overline{{}^{\theta} \mathfrak D_a^{\vec{\alpha}} } [f](\xi,q) = \overline{ {}^{  \theta}  \mathcal D_{q}}\circ  \mathcal I_a^q [f](\xi, q,\vec{\alpha}).$$
Note that, for $\vec{\alpha}=\vec{\beta}$, the above formula drawn the fact that the fractional  operator of order $\displaystyle\frac{1+\vec{\alpha}}{2}$ factorizes a fractional  Laplace operator: 
$${}^{\theta}\Delta_a^{\vec{\alpha}}:=\sum_{j=0}^3 D _{a_j^+}^{{1+\alpha_j}}.$$
\end{enumerate}
\end{proposition}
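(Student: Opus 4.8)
The plan is to reduce the statement to the scalar level, where the engine is the composition (semigroup) rule for one--dimensional Riemann--Liouville derivatives: for $0<\Re\beta$, $0<\Re(\alpha+\beta)<1$ and $g$ bounded on $[a,b]$ one has $(D_{a^+}^{\alpha}D_{a^+}^{\beta}g)(x)=(D_{a^+}^{\alpha+\beta}g)(x)$, the only obstruction term $({\bf I}_{a^+}^{1-\beta}g)(a^+)\,(x-a)^{-\alpha-1}/\Gamma(-\alpha)$ vanishing because $\Re(1-\beta)>0$ and $g$ is bounded. This is exactly where the hypothesis $0<\Re(\alpha_\ell+\beta_\ell)<1$ enters; the boundedness of the functions to which it is applied, as well as the right to interchange fractional derivatives taken in different real variables, is supplied by the $C^{2}$--regularity assumption on $q\mapsto\mathcal I_a^q[f](\xi,q,\vec\alpha)$ (and the analogous regularity for $\vec\beta$) combined with the first identity of the Proposition, which writes ${}^{\theta}\mathfrak D_a^{\vec\beta}[f]$ as ${}^{\theta}\mathcal D_q$ of a $C^{2}$ map.

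Next I would expand both fractional Fueter operators into the componentwise form already present in their Definition, ${}^{\theta}\mathfrak D_a^{\vec\gamma}=\sum_{\ell=0}^{3}e_\ell\,D_{(\ell)}^{\gamma_\ell}$, with $e_0=1$, $e_1=i$, $e_2=ie^{i\theta}j$, $e_3=e^{i\theta}j$, and $D_{(0)}^{\gamma_0}=D_{\Re a_1^+}^{\gamma_0}$, $D_{(1)}^{\gamma_1}=D_{\Im a_1^+}^{\gamma_1}$, $D_{(2)}^{\gamma_2}=D_{\Re a_2^+}^{\gamma_2}$, $D_{(3)}^{\gamma_3}=D_{\Im a_2^+}^{\gamma_3}$ acting on the four real coordinates of $q$ with the remaining three held at $\xi$. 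Multiplying out ${}^{\theta}\mathfrak D_a^{\vec\alpha}\circ{}^{\theta}\mathfrak D_a^{\vec\beta}[f]$ produces sixteen terms $e_\ell e_m\,D_{(\ell)}^{\alpha_\ell}D_{(m)}^{\beta_m}f$. Since each summand of ${}^{\theta}\mathfrak D_a^{\vec\beta}[f](\xi,q)$ depends on a single real coordinate of $q$, I would show that the off--diagonal terms ($\ell\neq m$) drop out upon restricting to the slice defining the outer $\ell$--block, so that only $\sum_{\ell=0}^{3}e_\ell^{2}\,D_{(\ell)}^{\alpha_\ell}D_{(\ell)}^{\beta_\ell}f$ remains; the scalar step then replaces $D_{(\ell)}^{\alpha_\ell}D_{(\ell)}^{\beta_\ell}$ by $D_{(\ell)}^{\alpha_\ell+\beta_\ell}$. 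Because $e_0^{2}=1$ and $e_1^{2}=e_2^{2}=e_3^{2}=-1$, this is precisely the asserted alternating combination. For the conjugated operator nothing changes except that the outer coefficients are now the $\overline{e_\ell}$; each $e_\ell$ being a unit quaternion, $\overline{e_\ell}e_\ell=|e_\ell|^{2}=1$ for every $\ell$, and all four signs turn into $+$. The auxiliary identity $\overline{{}^{\theta}\mathfrak D_a^{\vec\alpha}}[f](\xi,q)=\overline{{}^{\theta}\mathcal D_q}\circ\mathcal I_a^q[f](\xi,q,\vec\alpha)$ is the conjugate of the first identity of the Proposition and drops out of the same computation with the conjugate structural set.

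The step I expect to be the main obstacle is the vanishing of the off--diagonal contributions $e_\ell e_m\,D_{(\ell)}^{\alpha_\ell}D_{(m)}^{\beta_m}f$ ($\ell\neq m$): one has to verify coordinate by coordinate that they cancel, using the one--variable dependence of each block, the commutativity of Riemann--Liouville derivatives in distinct variables (legitimate under the $C^{2}$ hypothesis), the first identity of the Proposition, and the anticommutation $e_\ell e_m=-e_m e_\ell$ for $\ell,m\in\{1,2,3\}$. This bookkeeping is elementary but is where the argument must be handled with care. The factorization remark then follows by specialization: with $\vec\alpha=\vec\beta$ the conjugated formula reads $\overline{{}^{\theta}\mathfrak D_a^{\vec\alpha}}\circ{}^{\theta}\mathfrak D_a^{\vec\alpha}[f]=\sum_{j=0}^{3}D_{(j)}^{2\alpha_j}f$, and choosing the common order to be $\tfrac{1+\vec\alpha}{2}$ one has $0<\Re\tfrac{1+\alpha_j}{2}<1$ while $2\cdot\tfrac{1+\alpha_j}{2}=1+\alpha_j$, the semigroup step still being in force because $D_{a_j^+}^{\gamma}D_{a_j^+}^{\gamma}=\tfrac{d^{2}}{dx^{2}}{\bf I}_{a_j^+}^{2-2\gamma}=D_{a_j^+}^{2\gamma}$ when $\tfrac12<\Re\gamma<1$; this yields $\sum_{j=0}^{3}D_{a_j^+}^{1+\alpha_j}f={}^{\theta}\Delta_a^{\vec\alpha}[f]$, so that ${}^{\theta}\mathfrak D_a^{(1+\vec\alpha)/2}$ together with its conjugate factors the fractional Laplacian ${}^{\theta}\Delta_a^{\vec\alpha}$.
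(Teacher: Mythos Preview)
Your plan departs from the paper's proof, which is essentially a one-line citation: items 1, 3 and 4 are read off from \cite[Proposition~3.3]{BG1} after specializing the structural set to $\{1,i,ie^{i\theta}j,e^{i\theta}j\}$, and item 2 is obtained by first rewriting $f=f_1+f_2 j$ in the $\{1,i,ie^{i\theta}j,e^{i\theta}j\}$-basis and then quoting Fact~2 of the same proposition in \cite{BG1}. No direct computation with the sixteen cross terms is performed in the paper.

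Your direct expansion is more explicit, but the step you yourself flag as the main obstacle does not go through as stated. You argue that for $\ell\neq m$ the inner block $D_{(m)}^{\beta_m}f$ becomes constant after restriction to the outer $\ell$-slice and therefore ``drops out''. However, Riemann--Liouville derivatives do \emph{not} annihilate constants: by \eqref{cte}, $(D_{a^+}^{\alpha}c)(x)=c\,(x-a)^{-\alpha}/\Gamma(1-\alpha)$. Consequently each off-diagonal contribution equals
\[
e_\ell e_m\,\bigl(D_{(m)}^{\beta_m}f\bigr)(\xi)\,\frac{(x_\ell-a_\ell)^{-\alpha_\ell}}{\Gamma(1-\alpha_\ell)},
\]
which is nonzero in general. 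The anticommutation mechanism you propose cannot absorb these terms either: it fails whenever $\ell=0$ or $m=0$ because $e_0=1$ commutes with every $e_m$, and even for $\ell,m\in\{1,2,3\}$ the $(\ell,m)$ and $(m,\ell)$ contributions involve different constants $(D_{(m)}^{\beta_m}f)(\xi)$, $(D_{(\ell)}^{\beta_\ell}f)(\xi)$ and different power weights $(x_\ell-a_\ell)^{-\alpha_\ell}$, $(x_m-a_m)^{-\alpha_m}$, so $e_\ell e_m=-e_m e_\ell$ alone does not produce a cancellation.

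In short, the diagonal part of your computation (the semigroup identity $D_{(\ell)}^{\alpha_\ell}D_{(\ell)}^{\beta_\ell}=D_{(\ell)}^{\alpha_\ell+\beta_\ell}$ and the signs $e_\ell^2\in\{1,-1\}$) is fine and matches the claimed right-hand side, but the off-diagonal bookkeeping is a genuine gap. To make a self-contained argument you would need to revisit how the composition ${}^{\theta}\mathfrak D_a^{\vec\alpha}\circ{}^{\theta}\mathfrak D_a^{\vec\beta}$ is actually defined in the two-variable $(\xi,q)$ framework of \cite{BG1}, rather than treating it as a naive product of the two four-term sums; the paper sidesteps this entirely by invoking \cite{BG1}.
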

\begin{proof} 
\begin{enumerate}
\item Facts 3. and 4. are direct consequence of \cite [Proposition 3.3]{BG1} for the structural set $\{ 1,i, ie^{i \theta} j ,  e^{i \theta} j\}$. 
\item We may rewrite $f=  f_1+ f_2 j$ in terms of $\{ 1,i, ie^{i \theta} j ,  e^{i \theta} j  \}$ as 
\begin{align*}f= &f_1+ f_2 j =   \Re f_1 + i \Im f_1 +    e^{i\theta }j     e^{i\theta }\Re f_2 + i  e^{i\theta } j   e^{i\theta } \Im f_2 \\
=  &   \Re f_1 + i \Im f_1 +   i e^{i\theta }j    (-\sin (\theta )\Re f_2 + \cos(\theta) \Im f_2 ) +
   e^{i\theta }j    (\sin (\theta ) \Im f_2 + \cos(\theta)  \Re f_2  )
\end{align*}
to use Fact 2 of \cite[Proposition 3.3]{BG1}, which completes the proof. 
\end{enumerate}
\end{proof}

\begin{remark}\label{remark01}
Given $g  \in AC^1(J_a^b,\mathbb H)$  denote  
$${}^{\theta}\mathfrak D_{r,a}^{\vec{\beta}}[g](\xi,q) =  {}^{\theta}  \mathcal D_{q,r} 
\circ  \mathcal I_a^q [f](\xi, q,\vec{\beta}) $$
and note that 
\begin{align*} 
{}^{\theta}\mathfrak D_{r,a}^{\vec{\beta}}[g] (\xi, q):= & ( D _{\Re a_1^+}^{{\beta_0}}g)(\Re z_1 + i \Im \zeta_1 , \zeta_2)  
    +      ( D _{\Im a_1^+}^{{\beta_1}}g)(  \Re \zeta_1 + \Im z_1, \zeta_2) i  
    \\
 &		+     ( D _{\Re a_2^+}^{{\beta_2}}g)( \zeta_1, \Re z_2 + i \Im \zeta_2)  i e^{i\theta}j  
       +       ( D _{\Im a_2^+}^{{\beta_3}}g)(   \zeta_1, \Re \zeta_2 + \Im z_2 )  e^{i\theta}j.
\end{align*}

Thus, defining
\begin{align*}  \mathfrak D_{r,a_1}^{(\beta_0,\beta_1)}[g](\xi,q) = &   D_{\Re {a_1}^+}^{\beta_0} g  ( \Re z_1 + i \Im \zeta_1 , \zeta_2 ) +  D_{\Im {a}_1^+}^{\beta_1}  g ( \Re \zeta_1 + \Im z_1, \zeta_2 ) i , \nonumber\\
  \mathfrak D_{r,a_2}^{(\beta_2,\beta_3)}[g](\xi,q)= &   D_{\Re {a}_2^+}^{\beta_2} g  ( \zeta_1, \Re z_2 + i \Im \zeta_2 ) -  D_{\Im {a_2}^+}^{\beta_3}  g (  \zeta_1, \Re \zeta_2 + \Im z_2 ) 
	i, \end{align*}
we can see that
  \begin{align*} 
{}^{\theta}\mathfrak D_{r,a}^{\vec{\beta}}[g] (\xi, q) =    \mathfrak D_{r, a_1}^{(\beta_0,\beta_1)}[g](\xi,q) 
 +    \mathfrak D_{r, a_2}^{(\beta_2,\beta_3)}[g](\xi,q) i e^{i\theta}j.  
\end{align*}
\end{remark}

\begin{remark}\label{Remark1}
Properties exhibited by Proposition \ref{propFRACD} gives an extension of basic formulas related to the standard fractional Riemann-Louville derivative to the context of a fractional quaternionic analysis. This, essentially with 
$\dfrac{d}{dx}$ and  $({\bf I}_{a^+}^{\vec{\alpha}} f)(x)$ replaced by ${}^{\theta}\mathcal D$ and $\mathcal I_a^q [f](\xi, q,\vec{\alpha})$ respectively.

In particular, Fact 2. establish a quaternionic analogous of the Fundamental Theorem, comparing with \eqref{FundTheorem}.
\end{remark}
	
\begin{proposition}\label{Stokes}(Stokes type integral formula induced by ${}^{\theta} \mathfrak D_{a}^{\vec{\alpha}}$)
 
\noindent 
Let $\vec{\alpha}= (\alpha_0, \alpha_1,\alpha_2,\alpha_3), \vec{\beta}= (\beta_0, \beta_1,\beta_2,\beta_3) \in\mathbb C(\textsf{i})^4$ with 
  $0< \Re\alpha_{\ell}, \Re\beta_{\ell}<1$ for $\ell=0,1,2,3$ and given $f,g \in AC^1(\overline{J_a^b}, \mathbb H)$ with $\xi\in J_a^b$ such that   
 the mappings $q\mapsto  \mathcal I_a^q [f](\xi,q, \vec{\alpha})$ and $ q\mapsto \mathcal I_a^q [g](\xi,q, \vec{\beta} )$ belong to  $ C^1(\overline{J_a^b}, \mathbb H(\mathbb C(\textsf{i})))$.   
 Then 
 \begin{align*} &   \int_{\partial J_a^b}   \mathcal I_a^q [g](\xi,q, \vec{\beta}) \sigma^{{\theta} }_q  \mathcal I_a^q [f](\xi,q, \vec{\alpha}) \\ 
=  &      \int_{J_a^b }  \left(   \mathcal I_a^q [g](\xi,q,\vec{\beta}) \  {}^{\theta}\mathfrak D_a^{\vec{\alpha}}[f](\xi,q) + \  {}^{\theta} \mathfrak D_{r,a}^{\vec{\beta}}[g](\xi,q)  \mathcal I_a^q [f](\xi,q,\vec{\alpha})\right)d\mu_q.
\end{align*}
 \end{proposition}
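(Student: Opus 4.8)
The plan is to deduce the statement from the classical quaternionic Stokes formula \eqref{Int-stokes} by feeding it the two auxiliary maps produced by the fractional integral operator $\mathcal I_a^q$. Fix $\xi\in J_a^b$ and put
\[
F(q):=\mathcal I_a^q[f](\xi,q,\vec{\alpha}),\qquad G(q):=\mathcal I_a^q[g](\xi,q,\vec{\beta}).
\]
By hypothesis $F,G\in C^1(\overline{J_a^b},\mathbb H(\mathbb C(\textsf{i})))$. First I would observe that \eqref{Int-stokes}, although stated for $\mathbb H$-valued $C^1$ maps, carries over verbatim to $\mathbb H(\mathbb C(\textsf{i}))$-valued ones: splitting $F=F_1+\textsf{i}F_2$ and $G=G_1+\textsf{i}G_2$ with $F_\nu,G_\nu$ $\mathbb H$-valued and using that the pairing $(G,F)\mapsto G\,\sigma^{\theta}_q\,F$ together with the operators ${}^{\theta}\mathcal D_q$ and ${}^{\theta}\mathcal D_{q,r}$ are $\mathbb C(\textsf{i})$-linear (because $\textsf{i}$ commutes with the quaternionic imaginary units), one applies the real formula to each of the four $\mathbb H$-valued products $G_\nu\,\sigma^{\theta}_q\,F_\mu$ and recombines. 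Since $J_a^b$ is a four-dimensional rectangle its boundary is only piecewise smooth, but the corner set has vanishing $3$-dimensional measure, so $J_a^b$ is an admissible domain for \eqref{Int-stokes}; alternatively one applies \eqref{Int-stokes} on a smooth exhaustion of $J_a^b$ and passes to the limit using the $C^1(\overline{J_a^b})$ regularity of $F$ and $G$.

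Granting this, \eqref{Int-stokes} applied to $F$ and $G$ gives
\[
\int_{\partial J_a^b} G\,\sigma^{\theta}_q\,F=\int_{J_a^b}\bigl(G\,{}^{\theta}\mathcal D_q[F]+\mathcal D^{\theta}_q[G]\,F\bigr)\,d\mu_q.
\]
The second step is then a pure substitution. By Fact 1 of Proposition \ref{propFRACD},
\[
{}^{\theta}\mathcal D_q[F]={}^{\theta}\mathcal D_q\circ\mathcal I_a^q[f](\xi,q,\vec{\alpha})={}^{\theta}\mathfrak D_a^{\vec{\alpha}}[f](\xi,q),
\]
and by the definition recorded in Remark \ref{remark01},
\[
\mathcal D^{\theta}_q[G]={}^{\theta}\mathcal D_{q,r}\circ\mathcal I_a^q[g](\xi,q,\vec{\beta})={}^{\theta}\mathfrak D_{r,a}^{\vec{\beta}}[g](\xi,q).
\]
Inserting these two identities, and leaving $F=\mathcal I_a^q[f](\xi,q,\vec{\alpha})$ and $G=\mathcal I_a^q[g](\xi,q,\vec{\beta})$ untouched in the remaining factors, turns the displayed equality into exactly the asserted formula.

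The only step that requires genuine care is the first one, namely verifying that the hypotheses of the classical Stokes formula are actually met: the $C^1$-up-to-$\overline{J_a^b}$ regularity of $F$ and $G$ (which is assumed), the harmlessness of enlarging the coefficient ring from $\mathbb H$ to $\mathbb H(\mathbb C(\textsf{i}))$, and the treatment of the corners of the box $\partial J_a^b$. Everything after that is mechanical: all the fractional bookkeeping has already been absorbed into the identity ${}^{\theta}\mathfrak D_a^{\vec{\alpha}}[f]={}^{\theta}\mathcal D_q\circ\mathcal I_a^q[f]$, so no manipulation of Riemann--Liouville integrals is needed in the proof itself.
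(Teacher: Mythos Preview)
Your proposal is correct and follows essentially the same approach as the paper: apply the classical quaternionic Stokes formula \eqref{Int-stokes} to the auxiliary functions $F=\mathcal I_a^q[f](\xi,q,\vec{\alpha})$ and $G=\mathcal I_a^q[g](\xi,q,\vec{\beta})$, then invoke Fact~1 of Proposition~\ref{propFRACD} (and its right-sided counterpart from Remark~\ref{remark01}) to rewrite ${}^{\theta}\mathcal D_q[F]$ and ${}^{\theta}\mathcal D_{q,r}[G]$ as the fractional operators. The paper's proof is terser, simply pointing to \eqref{Int-stokes}, Proposition~\ref{propFRACD}, and the analogous result in \cite{BG1}, whereas you spell out the extension to $\mathbb H(\mathbb C(\textsf{i}))$-valued maps and the treatment of the corners of $\partial J_a^b$.
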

\begin{proof} Use \eqref{Int-stokes} and the first identity of Proposition \ref{propFRACD}. It follows by similar reasoning to \cite[Proposition 3.4]{BG1}.  
\end{proof}
  
\begin{theorem}\label{B-P-F-D}(Borel-Pompieu type formula induced by ${}^{\theta}\mathfrak D_{a}^{\vec{\alpha}}$ and $ {}^{\theta}\mathfrak D_{r,a}^{\vec{\beta}}$)
 
\noindent  
Let $\vec{\alpha}= (\alpha_0, \alpha_1,\alpha_2,\alpha_3), \vec{\beta}= (\beta_0, \beta_1,\beta_2,\beta_3) \in\mathbb C(\textsf{i})^4$ with $0< \Re\alpha_{\ell}, \Re\beta_{\ell}<1$ for $\ell=0,1,2,3$ and $f,g \in AC^1(\overline{J_a^b}, \mathbb H)$. Consider $\xi\in J_a^b$ such that the mappings   
 $q\to  \mathcal I_a^q [f](\xi,q,\vec{\alpha})$ and $q\to   \mathcal I_a^q [g](\xi,q, \vec{\beta})$, for $q\in J_a^b$, belong to $C^1(\overline{J_a^b}, \mathbb H(\mathbb C(\textsf{i})))$. Then 
\begin{align*}  &  
  \int_{\partial J_a^b } \left(\mathfrak K^{\vec{\alpha}}_{\theta, a}(\tau,q) \sigma_{\tau}^{\theta} \mathcal I_a^{\tau} [f](\xi,\tau, \vec{\alpha})  
		+     \mathcal I_a^{\tau} [g](\xi,\tau,\vec{\beta})  \sigma_{\tau}^{\theta} \mathfrak K^{\vec{\beta}}_{\theta, a}(\tau,q) \right) \\
		&		- 
\int_{J_a^b} \left(  \mathfrak K^{\vec{\alpha}}_{\theta, a}(\tau,q)  
   {}^{\theta}\mathfrak D_a^{\vec{\alpha}}[f](\xi , \tau)   +   
	   {}^{\theta} \mathfrak D_{r,a}^{\vec{\beta}}[g]( \xi, \tau )	\mathfrak K^{\vec{\beta}}_{\theta, a}(\tau, q)  
    		\right)  d\mu_{\tau}       \\
		=  &    \left\{ \begin{array}{ll}  (f+g) (\Re  z_1 + i\Im\zeta_1, \zeta_2) + 
		(f+g) (\Re  \zeta_1 + i\Im z_1, \zeta_2)  & \\
		+ 
		(f+g) (\zeta_1, \Re  z_2 + i\Im\zeta_2) + (f+g) (\zeta_1, \Re  \zeta_2 + i\Im z_2) & \\
+ N[f](\xi,q, \vec{\alpha}) + N[g](\xi,q, \vec{\beta}) , &    q\in 
		J_a^b ,  \\ 0 , &  q\in \mathbb C(i)^2\setminus\overline{J_a^b},                    
	\end{array} \right. 
	\end{align*} 
where  
\begin{align*} 
\mathfrak K^{\vec{\alpha}}_{\theta, a}(\xi,q) :=&     \frac{1}{2\pi^2} \left[ D _{{\Re a_1^+, \Re z_1} }^{\alpha_0}    \frac{ \bar\xi_1- \bar z_1}{ 
|\xi- q|^4}   + D_{{\Im a_1}^+, \Im z_1 }^{\alpha_1}    \frac{ \bar\xi_1- \bar z_1}{ 
|\xi- q|^4}  \right.  \\  
 & - i e^{-i\theta} \left. \left( D _{{\Re a_2^+, \Re z_2} }^{\alpha_2}   \frac{ 
 \bar\xi_2-  \bar z_2  }{ 
|\xi- q|^4} 
 + D _{{\Im a_2^+, \Im z_2} }^{\alpha_3}   \frac{ 
 \bar\xi_2-  \bar z_2  }{ 
|\xi- q|^4}  \right)j \right].
\end{align*}
Hence
$$\mathfrak K^{\vec{\alpha}}_{\theta, a}(\xi,q) = K^{ \alpha_0, \alpha_1}_{a}(\xi,q) +  K^{ \alpha_2, \alpha_3}_{a}(\xi,q) j,$$
where
\begin{align}\label{kernels} 
K^{ \alpha_0, \alpha_1}_{a}(\xi,q) := &   \frac{1}{2\pi^2} \left( D _{{\Re a_1^+, \Re z_1} }^{\alpha_0}    \frac{ \bar\xi_1- \bar z_1}{ 
|\xi- q|^4}   + D_{{\Im a_1}^+, \Im z_1 }^{\alpha_1}    \frac{ \bar\xi_1- \bar z_1}{ 
|\xi- q|^4} \right) \nonumber \\ 
 K^{ \alpha_2, \alpha_3}_{a}(\xi,q) := &
 -  \frac{1}{2\pi^2} i e^{-i\theta} \left( D _{{\Re a_2^+, \Re z_2} }^{\alpha_2}   \frac{ 
 \bar\xi_2-  \bar z_2  }{ 
|\xi- q|^4} + D _{{\Im a_2^+, \Im z_2} }^{\alpha_3}\frac{\bar\xi_2-  \bar z_2  }{|\xi- q|^4}\right). 
\end{align}
				
Note that	$D _{{\Re a_1, \Re z_1}^+}^{\alpha_0}$ is given in terms of $\Re z_1$ and the same behavior for the rest of derivatives. Moreover,    		  
  \begin{align*} & N[f ](\xi,q, \vec{\alpha}) = \\
	 & 
	({\bf I}_{\Re a_1^+}^{\alpha_0} f  ) 
(\Re z_1 + i \Im \zeta_1, \zeta_2) 	 \left(\frac{1} { \Gamma[\alpha_1] (\Im(z_1-a_1))^{\alpha_1}} +\frac{1} { \Gamma[\alpha_2] (\Re(z_2-a_2))^{\alpha_2}} + \frac{1} { \Gamma[\alpha_3] (\Im(z_2-a_2))^{\alpha_3}}
\right) \\
&  + 
({\bf I}_{\Im a_1^+}^{\alpha_1} f  ) 
(\Re \zeta_1 + i \Im z_1, \zeta_2) 	 \left(\frac{1} { \Gamma[\alpha_0] (\Re(z_1-a_1))^{\alpha_0}} +\frac{1} { \Gamma[\alpha_2] (\Re(z_2-a_2))^{\alpha_2}} + \frac{1} { \Gamma[\alpha_3] (\Im(z_2-a_2))^{\alpha_3}}
\right) \\
& +
	({\bf I}_{\Re a_2^+}^{\alpha_2} f  ) 
(\zeta_1, \Re z_2 + i \Im \zeta_2) 	 \left(\frac{1} { \Gamma[\alpha_0] (\Re(z_1-a_1))^{\alpha_0}} +\frac{1} { \Gamma[\alpha_1] (\Im(z_1-a_1))^{\alpha_1}} + \frac{1} { \Gamma[\alpha_3] (\Im(z_2-a_2))^{\alpha_3}}
\right) \\
& +
	({\bf I}_{\Im a_2^+}^{\alpha_3} f  ) 
(\zeta_1, \Re \zeta_2 + i \Im z_2) 	 \left(  \frac{1} { \Gamma[\alpha_0] (\Re(z_1-a_1))^{\alpha_0}} +\frac{1} { \Gamma[\alpha_1] (\Im(z_1-a_1))^{\alpha_1}} +\frac{1} { \Gamma[\alpha_2] (\Re(z_2-a_2))^{\alpha_2}} 
\right)
\end{align*}
and the same for $N[g](\xi,q, \vec{\beta})$.
\end{theorem}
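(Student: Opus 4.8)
The plan is to derive the identity from the classical quaternionic Borel--Pompeiu formula \eqref{ecua4}, applied to a well chosen pair of auxiliary functions, and then to rewrite every ingredient in terms of the fractional objects. Fix $\xi\in J_a^b$ and put
$$F(q):=\mathcal I_a^q[f](\xi,q,\vec{\alpha}),\qquad G(q):=\mathcal I_a^q[g](\xi,q,\vec{\beta}).$$
By hypothesis $F,G\in C^1(\overline{J_a^b},\mathbb H(\mathbb C(\textsf{i})))$, and \eqref{ecua4} extends verbatim from $\mathbb H$-valued to $\mathbb H(\mathbb C(\textsf{i}))$-valued functions by $\mathbb C(\textsf{i})$-linearity, splitting into the two $\mathbb H$-components. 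Applying it on $\Omega=J_a^b$ gives, for $q\in J_a^b$, the equality of $F(q)+G(q)$ with the boundary integral of $K_\theta(\tau-q)$ against $F$ and $G$ minus the volume integral of $K_\theta(\tau-q)$ against ${}^{\theta}\mathcal D[F]$ and $\mathcal D^{\theta}[G]$; for $q\in\mathbb C(i)^2\setminus\overline{J_a^b}$ that same expression equals $0$. By the first identity of Proposition \ref{propFRACD}, ${}^{\theta}\mathcal D[F]={}^{\theta}\mathfrak D_a^{\vec{\alpha}}[f](\xi,\cdot)$, and by its right-handed counterpart in Remark \ref{remark01}, $\mathcal D^{\theta}[G]={}^{\theta}\mathfrak D_{r,a}^{\vec{\beta}}[g](\xi,\cdot)$; so, up to the replacement of $K_\theta(\tau-q)$ by $\mathfrak K^{\vec{\alpha}}_{\theta,a}(\tau,q)$ in the $f$-terms and by $\mathfrak K^{\vec{\beta}}_{\theta,a}(\tau,q)$ in the $g$-terms, and up to unwinding the free term $F(q)+G(q)=\mathcal I_a^q[f](\xi,q,\vec{\alpha})+\mathcal I_a^q[g](\xi,q,\vec{\beta})$ into the four shifted evaluations plus $N[f]+N[g]$, the identity already has the stated form.

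Both of those remaining reductions are carried out by the same elementary but lengthy fractional bookkeeping. One inserts into the boundary and volume integrals the explicit description of $\mathcal I_a^{\tau}[\cdot]$ as a sum of four one-dimensional Riemann--Liouville integrals in the real components of $\tau$, applies Fubini's theorem, and uses Riemann--Liouville fractional integration by parts together with the difference structure of $K_\theta(\tau-q)$ to transfer each fractional order off $f$ and $g$ and onto the Cauchy kernel; this is exactly what turns $K_\theta$ into $\mathfrak K^{\vec{\alpha}}_{\theta,a}$ and $\mathfrak K^{\vec{\beta}}_{\theta,a}$, and the endpoint terms shed in each of the four directions are collected. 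The free term is then unwound direction by direction by the Fundamental Theorem for Riemann--Liouville fractional calculus \eqref{FundTheorem}, which produces $(f+g)(\Re z_1+i\Im\zeta_1,\zeta_2)+\cdots+(f+g)(\zeta_1,\Re\zeta_2+i\Im z_2)$, while the endpoint contributions --- evaluated by means of the formula \eqref{cte} for the Riemann--Liouville derivative of a constant --- assemble precisely into the Gamma-weighted sums $N[f](\xi,q,\vec{\alpha})$ and $N[g](\xi,q,\vec{\beta})$. For $q\in\mathbb C(i)^2\setminus\overline{J_a^b}$ no endpoint terms arise and the classical right-hand side was already $0$, so the value $0$ is preserved.

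The main obstacle is precisely this last computation. Over the eight faces of the $4$-rectangle $\partial J_a^b$ one must keep each of the four Riemann--Liouville operators --- and, on the right-hand factor, their reversed versions --- attached to the correct real variable, so that the transfer of fractional orders reproduces $\mathfrak K^{\vec{\alpha}}_{\theta,a}$ and $\mathfrak K^{\vec{\beta}}_{\theta,a}$ with no stray Gamma factors and the endpoint contributions collapse exactly onto $N[f]$ and $N[g]$. Conceptually nothing new occurs: the argument runs in parallel with \cite[Proposition 3.4]{BG1} and the Borel--Pompeiu theorem established there, specialised to the structural set $\{1,i,ie^{i\theta}j,e^{i\theta}j\}$; only the two-complex-variable bookkeeping is heavier.
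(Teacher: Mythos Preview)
Your approach is essentially the paper's: the authors prove this theorem in one line by invoking \cite[Theorem~3.5]{BG1} together with \cite[Remark~4]{BG1}, i.e.\ the general $\psi$-Fueter fractional Borel--Pompeiu formula specialised to the structural set $\{1,i,ie^{i\theta}j,e^{i\theta}j\}$, and what you have written is a sketch of how that cited result is obtained --- apply the classical formula \eqref{ecua4} to $F=\mathcal I_a^q[f](\xi,\cdot,\vec{\alpha})$ and $G=\mathcal I_a^q[g](\xi,\cdot,\vec{\beta})$, identify ${}^{\theta}\mathcal D[F]$ and $\mathcal D^{\theta}[G]$ via Proposition~\ref{propFRACD} and Remark~\ref{remark01}, and then carry out the fractional bookkeeping. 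Your closing remark that the argument runs in parallel with the Borel--Pompeiu theorem of \cite{BG1} is exactly right; the paper simply cites that theorem rather than reproducing its computation.

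One cautionary note: the passage from $K_\theta(\tau-q)$ to $\mathfrak K^{\vec{\alpha}}_{\theta,a}(\tau,q)$ is the heart of the matter, and your phrase ``fractional integration by parts \ldots\ to transfer each fractional order off $f$ and $g$ and onto the Cauchy kernel'' glosses over the fact that the fractional derivatives in $\mathfrak K^{\vec{\alpha}}_{\theta,a}(\tau,q)$ act in the components of $q$, not of the integration variable $\tau$. The mechanism in \cite{BG1} is rather to apply the componentwise Riemann--Liouville derivatives in $q$ to both sides of the classical identity for $F(q)$ and then use \eqref{FundTheorem} and \eqref{cte} to unwind $F(q)=\mathcal I_a^q[f](\xi,q,\vec{\alpha})$ into the four shifted evaluations plus the $N[f]$ residues; this is consistent with your outline but worth stating precisely so that the provenance of the $\Gamma$-weighted endpoint terms in $N[f]$, $N[g]$ is unambiguous.
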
 
\begin{proof}
It is sufficient to use \cite[Theorem 3.5]{BG1} together with \cite[Remark 4]{BG1}.
\end{proof}

\begin{corollary} 
Let $\vec{\alpha}= (\alpha_0, \alpha_1,\alpha_2,\alpha_3), \vec{\beta}= (\beta_0, \beta_1,\beta_2,\beta_3) \in\mathbb C(\textsf{i})^4$ with $0< \Re\alpha_{\ell}, \Re\beta_{\ell}<1$ for $\ell=0,1,2,3$ and $f,g \in AC^1(\overline{J_a^b}, \mathbb H)$. Consider $\xi\in J_a^b$ such that the mappings   
 $q\to  \mathcal I_a^q [f](\xi,q,\vec{\alpha})$ and $q\to   \mathcal I_a^q [g](\xi,q, \vec{\beta})$, for all $q\in J_a^b$, belong to $C^1(\overline{J_a^b}, \mathbb H(\mathbb C(\textsf{i}))).$ Then 
\begin{align*}  & \int_{\partial J_a^b } \left(\mathfrak K^{\vec{\alpha}}_{\theta, a}(\tau,\xi) \sigma_{\tau}^{\theta} \mathcal I_a^{\tau} [f]( \xi ,\tau, \vec{\alpha})  
		+     \mathcal I_a^{\tau} [g]( \xi ,\tau ,\vec{\beta})  \sigma_{\tau}^{\theta} \mathfrak K^{\vec{\beta}}_{\theta, a}(\tau,\xi) \right) \\
		&		- 
\int_{J_a^b} \left(  \mathfrak K^{\vec{\alpha}}_{\theta, a}(\tau,\xi)  
  {}^{\theta} \mathfrak D_a^{\vec{\alpha}}[f]( \xi  , \tau)    +   
	  {}^{\theta}  \mathfrak D_{r,a}^{\vec{\beta}}[g](\xi, \tau )	\mathfrak K^{\vec{\beta}}_{\theta, a}(,\tau ,\xi)  
    		\right)  d\mu_{\xi}   \\
						= &   4 (f+g) (\zeta_1 , \zeta_2) 
+ N[f](\zeta,\zeta, \vec{\alpha}) + N[g](\zeta,\zeta, \vec{\beta}).                   
\end{align*} 
If $ {}^{\theta} \mathfrak D_a^{\vec{\alpha}}[f](\xi,\cdot) = {}^{\theta} \mathfrak D_{a,r}^{\vec{\alpha}}[g](\xi,\cdot) =0$ on $J_a^b$, we have
\begin{align*}  &  
  \int_{\partial J_a^b } \left(\mathfrak K^{\vec{\alpha}}_{\theta, a}(\tau,\xi) \sigma_{\tau}^{\theta} \mathcal I_a^{\tau} [f]( \xi ,\tau, \vec{\alpha})  
		+     \mathcal I_a^{\tau} [g]( \xi ,\tau ,\vec{\beta})  \sigma_{\tau}^{\theta} \mathfrak K^{\vec{\beta}}_{\theta, a}(\tau,\xi) \right) \\
						= &   4 (f+g) (\zeta_1 , \zeta_2) 
+ N[f](\zeta,\zeta, \vec{\alpha}) + N[g](\zeta,\zeta, \vec{\beta}).                   
\end{align*}  	 
\end{corollary}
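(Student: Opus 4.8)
The plan is to obtain the corollary as a direct specialization of Theorem \ref{B-P-F-D} by setting $q=\xi$ in the identity already established there. First I would observe that the only thing preventing a completely mechanical substitution is checking that the kernel $\mathfrak K^{\vec\alpha}_{\theta,a}(\tau,q)$ and all the attendant fractional derivatives remain well-defined and integrable when $q$ is taken to coincide with the fixed point $\xi$; but this is exactly the regularity hypothesis imposed on $q\mapsto\mathcal I_a^q[f](\xi,q,\vec\alpha)$ and $q\mapsto\mathcal I_a^q[g](\xi,q,\vec\beta)$, and by Fact 1 of Proposition \ref{propFRACD} the kernel construction is nothing but ${}^\theta\mathcal D_q$ (resp.\ ${}^\theta\mathcal D_{q,r}$) applied to these $\mathcal I$-operators, so no new hypothesis is needed. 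Since $\xi\in J_a^b$, the relevant branch of the conclusion of Theorem \ref{B-P-F-D} is the first one, namely the value $q\in J_a^b$.

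Next I would evaluate the four ``diagonal'' translated arguments appearing in the right-hand side of Theorem \ref{B-P-F-D} at $q=\xi$, i.e.\ with $z_1=\zeta_1$ and $z_2=\zeta_2$. Each of the four points $(\Re z_1+i\Im\zeta_1,\zeta_2)$, $(\Re\zeta_1+i\Im z_1,\zeta_2)$, $(\zeta_1,\Re z_2+i\Im\zeta_2)$, $(\zeta_1,\Re\zeta_2+i\Im z_2)$ collapses to $(\zeta_1,\zeta_2)$, so the sum $\sum (f+g)(\cdots)$ becomes $4(f+g)(\zeta_1,\zeta_2)$. Likewise the residual terms $N[f](\xi,q,\vec\alpha)$ and $N[g](\xi,q,\vec\beta)$ become $N[f](\zeta,\zeta,\vec\alpha)$ and $N[g](\zeta,\zeta,\vec\beta)$ in the notation of the statement (here I am writing $\zeta$ for the point $\xi=(\zeta_1,\zeta_2)$, matching the corollary's notation). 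On the left-hand side I would simply rename the integration variable from $\tau$ and substitute $q=\xi$ into $\mathfrak K^{\vec\alpha}_{\theta,a}(\tau,q)$ and $\mathfrak K^{\vec\beta}_{\theta,a}(\tau,q)$, and into ${}^\theta\mathfrak D_a^{\vec\alpha}[f](\xi,\tau)$, ${}^\theta\mathfrak D_{r,a}^{\vec\beta}[g](\xi,\tau)$, which are already functions of $\xi$ alone after the evaluation at their own ``$q=\xi$'' slot; this yields precisely the displayed first identity of the corollary.

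For the second assertion I would invoke the hypothesis ${}^\theta\mathfrak D_a^{\vec\alpha}[f](\xi,\cdot)=0$ and ${}^\theta\mathfrak D_{r,a}^{\vec\alpha}[g](\xi,\cdot)=0$ on $J_a^b$: this annihilates the two integrands of the solid ($4$-dimensional) integral $\int_{J_a^b}(\cdots)\,d\mu_\xi$ in the first identity, leaving only the boundary integral over $\partial J_a^b$ equal to $4(f+g)(\zeta_1,\zeta_2)+N[f](\zeta,\zeta,\vec\alpha)+N[g](\zeta,\zeta,\vec\beta)$, which is the second display. The main (and essentially the only) obstacle is a bookkeeping one: one must be careful that the ``$\xi$ fixed'' convention is compatible with the further specialization $q=\xi$, i.e.\ that differentiating in the real components of $z_1,z_2$ and only afterwards identifying $z_k=\zeta_k$ gives the same object as in Theorem \ref{B-P-F-D}; since in that theorem the derivations were already carried out before any evaluation, the substitution is legitimate and nothing beyond careful tracking of which variable each $D_{\cdot^+}^{\cdot}$ acts on is required. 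No genuinely new analytic input is needed beyond Theorem \ref{B-P-F-D}.
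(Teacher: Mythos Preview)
Your proposal is correct and matches the paper's intended approach: the corollary is stated immediately after Theorem \ref{B-P-F-D} without a separate proof, so it is meant to follow by the very specialization $q=\xi$ that you carry out, together with the observation that each of the four shifted evaluation points collapses to $(\zeta_1,\zeta_2)$, yielding the factor $4$, and that the vanishing hypothesis on ${}^{\theta}\mathfrak D_a^{\vec{\alpha}}[f]$ and ${}^{\theta}\mathfrak D_{r,a}^{\vec{\beta}}[g]$ kills the solid integral. One small inaccuracy: your remark that the kernel $\mathfrak K^{\vec\alpha}_{\theta,a}$ arises as ${}^\theta\mathcal D_q$ applied to the $\mathcal I$-operators is not how the kernel is defined (it is built from fractional derivatives of the $\theta$-Cauchy kernel, cf.\ \eqref{kernels}); but this plays no role in the argument, since the integrability of $\mathfrak K^{\vec\alpha}_{\theta,a}(\tau,\xi)$ for $\xi\in J_a^b$ is already part of the content of Theorem \ref{B-P-F-D}.
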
 

\subsection{Fractional Borel-Pompeiu type formula for holomorphic functions in two complex variables}\label{3.2}
Let $f \in AC^1(J_a^b,\mathbb C(i))$ and $\vec{\alpha} = (\alpha_0, \alpha_1,\alpha_2,\alpha_3) \in\mathbb C(\textsf{i})^4$ with $0< \Re \alpha_{\ell}<1$ for $\ell=0,1,2,3$. Set $a=(a_1,a_2), b=(b_1,b_2) \in \mathbb C(i)^2$ and    $q=(z_1, z_2),\ \xi=(\zeta_1, \zeta_2) \in J_a^{b}$. 
 
Using the operators given in  \eqref{ComplexOper} define  
\begin{align*}  
    {\bf D}_{a}^{\vec \alpha}[f](\xi,q) :=  &   	  
  \left( \mathfrak D_{a_1}^{(\alpha_0,\alpha_1)}[f ] (\xi,q)  , \   \mathfrak D_{a_2}^{(\alpha_2,\alpha_3)}
	  [f](\xi,q)  \right). 
\end{align*}
	
Particularly, the fractional derivative of order $\vec \alpha$ at the point $\xi\in J_a^ b$ is 
$$\left( \mathfrak D_{a_1}^{(\alpha_0,\alpha_1)}[f ] (\xi,\xi)  , \   \mathfrak D_{a_2}^{(\alpha_2,\alpha_3)} [f](\xi,\xi)  \right).$$

Moreover, the fractional differential equation 
$${\bf D}_a^{\vec{\alpha}}[f(\xi,\cdot)]=(0,0)$$ 
on $J_a^b$ is equivalent to the fractional Cauchy-Riemann system that arises from the following identities: 
\begin{align*}  
D_{\Re {a_1}^+}^{\alpha_0} f(\Re z_1 + i \Im \zeta_1 , \zeta_2) = & - i D_{\Im {a}_1^+}^{\alpha_1}  f ( \Re \zeta_1 + \Im z_1, \zeta_2 ) ,  \\
    D_{\Re {a}_2^+}^{\alpha_2} f(\zeta_1, \Re z_2 + i \Im \zeta_2) =  &- i  D_{\Im {a_2}^+}^{\alpha_3} f(\zeta_1, \Re \zeta_2 + \Im z_2), 
\end{align*} 
for every $q=(z_1,z_2)\in J_a^b$.

\begin{proposition}\label{propFRACD1}
Given  $f \in AC^1(J_a^b,\mathbb C(i))$  and $ \vec{\alpha} = (\alpha_0, \alpha_1,\alpha_2,\alpha_3) \in\mathbb C(\textsf{i})^4$ with $0< \Re \alpha_{\ell} <1$ for $\ell=0,1,2,3$ and 
set $q=(z_1,z_2) , \ \xi= (\zeta_1,\zeta_2) \in J_a^b$. We have the following identities:   

\begin{enumerate} 
\item $\frac{1}{2} \mathfrak D_{a_1}^{(\alpha_0,\alpha_1)}[f](\xi,q) = \dfrac{d}{d\bar z_1} \mathcal I_a^q [f] (\xi,q,\vec{\alpha})$.   

\item $\frac{1}{2}  \mathfrak D_{a_2}^{(\alpha_2,\alpha_3)}[f](\xi,q)= \dfrac{d}{d\bar z_2} \mathcal I_a^q [f] (\xi,q,\vec{\alpha})   $.

\item $\mathfrak D_{a_1}^{(\alpha_0,\alpha_1)}\circ \mathfrak I_{a_1}^{(\alpha_0, \alpha_1)}[f](\xi,q)= \Re f(\Re z_1 + i \Im \zeta_1, \zeta_2 ) + i \Im f (\Re \zeta_1  + i \Im z_1, \zeta_2).$ 

Particularly, $\mathfrak D_{a_1}^{(\alpha_0,\alpha_1)}\circ \mathfrak I_{a_1}^{(\alpha_0, \alpha_1)}[f](\xi,q)\mid_{q=\xi}= f(\xi)$. 

\item $\mathfrak D_{a_2}^{(\alpha_2,\alpha_3)}\circ  \mathfrak I_{a_2}^{(\alpha_2, \alpha_3)}[f](\xi,q) =\Re f ( \zeta_1 , \Re z_2 + i \Im \zeta_2 ) + i \Im f(\zeta_1 , \Re \zeta_2  + i \Im z_2)$. 

Particularly, $\mathfrak D_{a_2}^{(\alpha_2,\alpha_3)}\circ  \mathfrak I_{a_2}^{(\alpha_2, \alpha_3)}[f](\xi,q)\mid_{q=\xi}= f(\xi)$. 

\item $ \mathfrak D_{a_1}^{(\alpha_0,\alpha_1)}\circ   \mathfrak I_{a_2}^{(\alpha_2, \alpha_3)}[f](\zeta, z) =  \mathfrak D_{a_2}^{(\alpha_2,\alpha_3)}\circ \mathfrak I_{a_1}^{(\alpha_0, \alpha_1)}[f](\zeta, z)=0$

\item $2\dfrac{d}{d z_1} 	\circ \mathfrak D_{a_1}^{(\alpha_0,\alpha_1)}[f](\xi,q) = \Delta_{z_1}  \mathcal I_a^q [f] (\xi,q,\vec{\alpha})$, 
where $\Delta_{z_1}$   denotes the Laplacian operator  in $\mathbb R^2$ according to the real components of $z_1$.

\item $2\dfrac{d}{d z_2}  \mathfrak D_{a_2}^{(\alpha_2,\alpha_3)}[f](\xi,q)= \Delta_{z_2}  \mathcal I_a^q [f] (\xi,q,\vec{\alpha})   $, 
where $\Delta_{z_2}$   denotes the Laplacian operator  in $\mathbb R^2$ according to the real components of $z_2$.

\item  If the mapping $q\to \mathcal I_a^{q} [f](\xi, q,\vec{\alpha})$ belongs to $C^2(J_a^b, \mathbb C(i)(\mathbb C(\textsf{i})))$  for all $\xi$ and set $\vec{\beta} = (\beta_0, \beta_1, \beta_2, \beta_3) \in\mathbb C(\textsf{i})^4$ with $0< \Re(\alpha_{\ell}+\beta_{\ell}) <1$ for $\ell=0,1,2,3$  then we have  
\begin{align*} 
 \mathfrak D_{a_1}^{ ({\alpha}_0, \alpha_1)} \circ  \mathfrak D_{a_1}^{ ({\beta}_0,\beta_1)}[f] (\xi,q) = &   D _{\Re a_1^+}^{{\alpha_0 + \beta_0}} f(\Re z_1 + i \Im \zeta_1 , \zeta_2) -   D _{\Im a_1^+}^{{\alpha_1 + \beta_1}} f(\Re \zeta_1 + i \Im z_1 , \zeta_2) ,\\
 \mathfrak D_{a_2}^{ ({\alpha}_2, \alpha_3)} \circ  \mathfrak D_{a_2}^{ ({\beta}_2,\beta_3)}[f] (\xi,q) =  
 & -  D _{\Re a_2^+}^{{\alpha_2 + \beta_2}} f( \zeta_1, \Re z_2 + i \Im \zeta_2 )-
 D _{\Im a_2^+}^{{\alpha_3 + \beta_3}} f(\zeta_1, \Re \zeta_2 + i\Im z_1 )
\end{align*} 
\begin{align*} 
 \mathfrak D_{a_1}^{ ({\alpha}_0, \alpha_1)} \circ \mathfrak D_{a_2}^{ ({\beta}_2,\beta_3)}[f] (\xi,q)   =  
 \mathfrak D_{a_2}^{ ({\alpha}_2, \alpha_3)} \circ \mathfrak D_{a_1}^{ ({\beta}_0,\beta_1)}[f] (\xi,q) = 0
\end{align*} 
\begin{align*} 
\overline{  \mathfrak D_{a_1}^{ ({\alpha}_0, \alpha_1)} } \circ  \mathfrak D_{a_1}^{ ({\beta}_0,\beta_1)}[f] (\xi,q) = &   D _{\Re a_1^+}^{{\alpha_0 + \beta_0}} f(\Re z_1 + i \Im \zeta_1 , \zeta_2) + D _{\Im a_1^+}^{{\alpha_1 + \beta_1}} f(\Re \zeta_1 + i \Im z_1 , \zeta_2) ,\\
\overline{  \mathfrak D_{a_2}^{ ({\alpha}_2, \alpha_3)}} \circ  \mathfrak D_{a_2}^{ ({\beta}_2,\beta_3)}[f] (\xi,q) =  
 & +  D _{\Re a_2^+}^{{\alpha_2 + \beta_2}} f( \zeta_1, \Re z_2 + i \Im \zeta_2 )+
 D _{\Im a_2^+}^{{\alpha_3 + \beta_3}} f(\zeta_1, \Re \zeta_2 + i\Im z_1 ),
\end{align*} 
where 
$$\overline{\mathfrak D_{a_1}^{ ({\alpha}_0, \alpha_1)}} [f](\xi,q) = 2 \dfrac{d}{d  z_1} \mathcal I_a^q [f] (\xi,q,\vec{\alpha}),$$
$$\overline{\mathfrak D_{a_2}^{ ({\alpha}_2, \alpha_3)}} [f](\xi,q) = 2 \dfrac{d}{d  z_2} \mathcal I_a^q [f] (\xi,q,\vec{\alpha}).$$
\end{enumerate}
\end{proposition}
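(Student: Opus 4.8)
The plan is to treat this proposition as the $\mathbb C(i)$-valued shadow of Proposition \ref{propFRACD}: when $f$ takes values in $\mathbb C(i)$, so does $\mathcal I_a^q[f]$, and each $\mathbb H(\mathbb C(\textsf i))$-valued identity in Proposition \ref{propFRACD} splits into a ``no $j$'' part and a ``$j$-carrying'' part. I would read those parts off with the help of the complex form ${}^\theta\mathcal D=2\bigl(\tfrac{\partial}{\partial\bar z_1}+ie^{i\theta}j\,\tfrac{\partial}{\partial\bar z_2}\bigr)$ and of the decomposition ${}^\theta\mathfrak D_a^{\vec\alpha}[f]=\mathfrak D_{a_1}^{(\alpha_0,\alpha_1)}[f]+ie^{i\theta}j\,\mathfrak D_{a_2}^{(\alpha_2,\alpha_3)}[f]$, and then reduce the composition identities to the one-dimensional Riemann--Liouville facts \eqref{FundTheorem} and \eqref{cte} together with the semigroup law $D_{c^+}^{\alpha}\circ D_{c^+}^{\beta}=D_{c^+}^{\alpha+\beta}$.

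For items 1, 2, 6 and 7 I would start from Proposition \ref{propFRACD}(1), i.e.\ ${}^\theta\mathfrak D_a^{\vec\alpha}[f](\xi,q)={}^\theta\mathcal D_q\circ\mathcal I_a^q[f](\xi,q,\vec\alpha)$. Since $\mathcal I_a^q[f]$ is $\mathbb C(i)(\mathbb C(\textsf i))$-valued, $2\tfrac{\partial}{\partial\bar z_1}\mathcal I_a^q[f]$ carries no $j$ while $2ie^{i\theta}j\,\tfrac{\partial}{\partial\bar z_2}\mathcal I_a^q[f]$ does; comparing with the analogous split of ${}^\theta\mathfrak D_a^{\vec\alpha}[f]$ yields items 1 and 2. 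Items 6 and 7 then follow by applying $2\tfrac{\partial}{\partial z_k}$ to those identities and using the Wirtinger relation $4\tfrac{\partial}{\partial z_k}\tfrac{\partial}{\partial\bar z_k}=\Delta_{z_k}$; the conjugate operator $\overline{\mathfrak D_{a_k}^{(\cdot,\cdot)}}$, obtained by replacing $i$ with $-i$ (equivalently $\tfrac{\partial}{\partial\bar z_k}$ by $\tfrac{\partial}{\partial z_k}$), then satisfies $\overline{\mathfrak D_{a_k}^{(\cdot,\cdot)}}[f]=2\tfrac{\partial}{\partial z_k}\mathcal I_a^q[f]$, which is the displayed auxiliary identity.

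For items 3, 4 and 5 I would exploit that, by construction, $\mathfrak I_{a_1}^{(\alpha_0,\alpha_1)}[f](\xi,\cdot)$ is a sum of two one-dimensional left Riemann--Liouville integrals --- one in $\Re z_1$ of order $\alpha_0$ applied to $\Re f$, one in $\Im z_1$ of order $\alpha_1$ applied to $\Im f$ --- while $\mathfrak D_{a_1}^{(\alpha_0,\alpha_1)}$ acts in exactly those two channels with the matching orders, and similarly in the $z_2$-channels. Items 3 and 4 then come from \eqref{FundTheorem} applied channel by channel, $D_{\Re a_1^+}^{\alpha_0}\mathbf I_{\Re a_1^+}^{\alpha_0}(\Re f)=\Re f$ and $D_{\Im a_1^+}^{\alpha_1}\mathbf I_{\Im a_1^+}^{\alpha_1}(\Im f)=\Im f$ (and the $z_2$-analogues), after recombining; setting $q=\xi$ and using $\Re f(\zeta_1,\zeta_2)+i\Im f(\zeta_1,\zeta_2)=f(\zeta_1,\zeta_2)$ gives the ``particular'' statements. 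Item 5 is then the disjoint-channel phenomenon: $\mathfrak D_{a_1}^{(\alpha_0,\alpha_1)}$ touches only the channels labelled $0,1$, whereas $\mathfrak I_{a_2}^{(\alpha_2,\alpha_3)}[f]$ lives entirely in channels $2,3$ (it does not involve $z_1$ at all), so the composition vanishes identically, and symmetrically for $\mathfrak D_{a_2}^{(\alpha_2,\alpha_3)}\circ\mathfrak I_{a_1}^{(\alpha_0,\alpha_1)}$.

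Finally, for item 8 I would compose two fractional operators channel by channel: the $k$-th channel contributes $D_{c_k^+}^{\alpha_k}\circ D_{c_k^+}^{\beta_k}=D_{c_k^+}^{\alpha_k+\beta_k}$, the Riemann--Liouville semigroup law, whose use requires $0<\Re(\alpha_k+\beta_k)<1$ and the $C^2$-regularity of $q\mapsto\mathcal I_a^q[f]$ --- precisely the hypotheses imposed, and in parallel with Proposition \ref{propFRACD}(4) via \cite{BG1}. The signs are dictated by the structural set $\{1,i,ie^{i\theta}j,e^{i\theta}j\}$: in the $z_1$-block the two relevant coefficients square to $1$ and $-1$, producing the pattern $(+,-)$, whereas in the $z_2$-block both square to $-1$, producing $(-,-)$; replacing $\mathfrak D$ by $\overline{\mathfrak D}$ (i.e.\ $\tfrac{\partial}{\partial\bar z_k}$ by $\tfrac{\partial}{\partial z_k}$) flips the sign of the second channel of each block, turning both into $(+,+)$, and the mixed compositions $\mathfrak D_{a_1}^{(\alpha_0,\alpha_1)}\circ\mathfrak D_{a_2}^{(\beta_2,\beta_3)}$ and $\mathfrak D_{a_2}^{(\alpha_2,\alpha_3)}\circ\mathfrak D_{a_1}^{(\beta_0,\beta_1)}$ vanish by the same disjoint-channel argument. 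The step I expect to be the genuine obstacle is making the ``channelwise'' behaviour rigorous: \emph{a priori} the operator $D^{\alpha}$ applied to a function constant in its own variable does not vanish but produces the power weight of \eqref{cte}, so one has to argue that inside these compositions --- which are built on $\mathcal I_a^q$ and ${}^\theta\mathcal D$ exactly as in \cite{BG1} --- each one-dimensional operator truly ignores the remaining real variables, while also keeping careful track of the quaternionic signs $\psi_2^2=\psi_3^2=-1$ that single out the $z_2$-block. Granting that bookkeeping, the whole proposition is a routine componentwise reading of Proposition \ref{propFRACD}.
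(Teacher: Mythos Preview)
Your approach is essentially the same as the paper's: the proof there simply says the proposition follows from Proposition~\ref{propFRACD} together with Remarks~\ref{remark001} and~\ref{remark01}, i.e.\ read off the $\mathbb C(i)$-components of each quaternionic identity via the decompositions ${}^{\theta}\mathfrak D_a^{\vec\alpha}[f]=\mathfrak D_{a_1}^{(\alpha_0,\alpha_1)}[f]+ie^{i\theta}j\,\mathfrak D_{a_2}^{(\alpha_2,\alpha_3)}[f]$ and its right analogue. You spell out explicitly how each item is extracted and where the one-dimensional facts \eqref{FundTheorem} and the semigroup law enter, which the paper leaves implicit by deferring to \cite{BG1}; your caveat about $D^{\alpha}$ of a constant not vanishing is well-taken, but neither the paper nor \cite{BG1} treats it differently, so your proposal matches the intended argument.
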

\begin{proof} Its follows by Proposition \ref{propFRACD1} on account of the above Remarks \ref{remark001} and \ref{remark01}.
\end{proof}

\begin{proposition}(Stokes type integral formula induced by $ {\bf D}_{a}^{\vec{\alpha}}$ and ${\bf  D}_{r,a}^{\vec{\beta}}$).

\noindent  
If $\vec{\alpha}= (\alpha_0, \alpha_1,\alpha_2,\alpha_3), \vec{\beta}= (\beta_0, \beta_1,\beta_2,\beta_3) \in\mathbb C(\textsf{i})^4$ with $0< \Re\alpha_{\ell}, \Re\beta_{\ell}<1$ for $\ell=0,1,2,3$ and  let $f,g \in AC^1(\overline{J_a^b}, \mathbb C(i))$ consider $\xi\in J_a^b$ such that   
 the mappings $q\mapsto  \mathcal I_a^q [f](\xi,q, \vec{\alpha})$ and $ q\mapsto \mathcal I_a^q [g](\xi,q, \vec{\beta} )$ belong to  $ C^1(\overline{J_a^b}, \mathbb C(i)(\mathbb C(\textsf{i})) )$.   
 Then 
 \begin{align*} &   \int_{\partial J_a^b}   \mathcal I_a^q [g](\xi,q, \vec{\beta}) 
  (d \bar q_{[2]} \wedge dq)
 \mathcal I_a^q [f](\xi,q, \vec{\alpha}) \\ 
=  &   2   \int_{J_a^b }  \left(   \mathcal I_a^q [g](\xi,q,\vec{\beta}) \    \mathfrak D_{a_1}^{(\alpha_0,\alpha_1)}[f](\xi,q)    + \   \mathfrak D_{r, a_1}^{(\beta_0,\beta_1)}[g](\xi,q)  \mathcal I_a^q [f](\xi,q,\vec{\alpha})\right)d\mu_q.
\end{align*}
and 
 \begin{align*} &  - \int_{\partial J_a^b}   \mathcal I_a^q [g](\xi,q, \vec{\beta}) 
 ie^{i\theta}(d  q _{[1]} \wedge d\bar  q ) \overline{  \mathcal I_a^q [f](\xi,q, \vec{\alpha}) }\\ 
=  &   2   \int_{J_a^b }  \left(   \mathcal I_a^q [g](\xi,q,\vec{\beta}) \  
i e^{i\theta}\overline{    \mathfrak D_{a_2}^{(\alpha_2,\alpha_3)}[f](\xi,q)} 
 + \     \mathfrak D_{r, a_2}^{(\beta_2,\beta_3)}[g](\xi,q) i e^{i\theta}\overline{\mathcal I_a^q [f](\xi,q,\vec{\alpha})}\right)  d\mu_q.
\end{align*}
\end{proposition}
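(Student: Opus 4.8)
The plan is to obtain both identities by splitting the $\mathbb H(\mathbb C(\textsf{i}))$-valued Stokes formula of Proposition~\ref{Stokes} into its $\mathbb C(i)(\mathbb C(\textsf{i}))$-valued part and its $j$-part, using that $\mathbb H(\mathbb C(\textsf{i}))=\mathbb C(i)(\mathbb C(\textsf{i}))\oplus\mathbb C(i)(\mathbb C(\textsf{i}))\,j$.

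Since here $f,g\in AC^1(\overline{J_a^b},\mathbb C(i))$, the maps $q\mapsto\mathcal I_a^q[f](\xi,q,\vec{\alpha})$ and $q\mapsto\mathcal I_a^q[g](\xi,q,\vec{\beta})$ take values in $\mathbb C(i)(\mathbb C(\textsf{i}))$, so Proposition~\ref{Stokes} is available in this case. First I would insert into it the complex form of the area element,
$$\sigma^{\theta}_q=\frac{1}{2}\,(d\bar q_{[2]}\wedge dq)-\frac{1}{2}\,ie^{i\theta}(dq_{[1]}\wedge d\bar q)\,j,$$
the decomposition $\,{}^{\theta}\mathfrak D_a^{\vec{\alpha}}[f]=\mathfrak D_{a_1}^{(\alpha_0,\alpha_1)}[f]+ie^{i\theta}j\,\mathfrak D_{a_2}^{(\alpha_2,\alpha_3)}[f]$ of the fractional $\theta$-Fueter operator, and the companion one $\,{}^{\theta}\mathfrak D_{r,a}^{\vec{\beta}}[g]=\mathfrak D_{r,a_1}^{(\beta_0,\beta_1)}[g]+\mathfrak D_{r,a_2}^{(\beta_2,\beta_3)}[g]\,ie^{i\theta}j$ from Remark~\ref{remark01}; then I would carry every $j$ to the right via $aj=j\bar a$, which is legitimate since all surviving scalar factors lie in the commutative ring $\mathbb C(i)(\mathbb C(\textsf{i}))$. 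After this rearrangement both the boundary integrand and the volume integrand of Proposition~\ref{Stokes} appear as a sum of a $\mathbb C(i)(\mathbb C(\textsf{i}))$-valued term and a $\mathbb C(i)(\mathbb C(\textsf{i}))$-valued term multiplied on the right by $j$.

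Since these two summands are $\mathbb R$-linearly independent, the identity of Proposition~\ref{Stokes} breaks into two separate identities. The part free of $j$ is precisely the first assertion after clearing the factor $\tfrac12$ coming from $\sigma^\theta_q$; the $j$-part, once the common right factor $j$ is cancelled, is precisely the second assertion (the conjugates $\overline{\mathfrak D_{a_2}^{(\alpha_2,\alpha_3)}[f]}$ and $\overline{\mathcal I_a^q[f]}$ on its right-hand side being exactly what is produced by $jA=\bar A j$). I do not expect Proposition~\ref{propFRACD1} to be needed for this route; alternatively, one may deduce the two displays from the classical complex Stokes theorem in $\mathbb C(i)^2$ applied separately in the variables $z_1$ and $z_2$, rewriting the $\bar z_1$- and $z_2$-derivatives of $\mathcal I_a^q[\cdot]$ through Proposition~\ref{propFRACD1}.

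I expect the only point requiring genuine care to be the conjugation bookkeeping in the $j$-part: one must keep track of which factor the quaternionic conjugation acts on as $j$ is transported past $\mathcal I_a^q[f]$ and past $\mathfrak D_{a_2}^{(\alpha_2,\alpha_3)}[f]$, and must check that the two summands of $\sigma^{\theta}_q$ do not interact. Both facts are immediate here precisely because the data take values in $\mathbb C(i)(\mathbb C(\textsf{i}))$ rather than in all of $\mathbb H(\mathbb C(\textsf{i}))$; beyond this, the computation is a transcription of the proof of Proposition~\ref{Stokes} (compare \cite[Proposition 3.4]{BG1}).
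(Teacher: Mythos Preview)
Your approach is correct and is essentially the same as the paper's: the paper's proof simply says to use Proposition~\ref{Stokes} together with the three identities $\sigma^{\theta}_q=\tfrac12(d\bar q_{[2]}\wedge dq)-\tfrac12 ie^{i\theta}(dq_{[1]}\wedge d\bar q)\,j$, ${}^{\theta}\mathfrak D_a^{\vec{\alpha}}[f]=\mathfrak D_{a_1}^{(\alpha_0,\alpha_1)}[f]+ie^{i\theta}j\,\mathfrak D_{a_2}^{(\alpha_2,\alpha_3)}[f]$, and ${}^{\theta}\mathfrak D_{r,a}^{\vec{\beta}}[g]=\mathfrak D_{r,a_1}^{(\beta_0,\beta_1)}[g]+\mathfrak D_{r,a_2}^{(\beta_2,\beta_3)}[g]\,ie^{i\theta}j$, which is exactly the splitting you describe; your write-up is in fact more explicit about the $aj=j\bar a$ bookkeeping than the paper's.
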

\begin{proof} Use Proposition \ref{Stokes} and the identities:
\begin{align*} 
{}^{\theta}\mathfrak D_a^{\vec{\alpha}}[f] (\xi, q) =    \mathfrak D_{a_1}^{(\alpha_0,\alpha_1)}[f](\xi,q) 
 + i e^{i\theta}j    \mathfrak D_{a_2}^{(\alpha_2,\alpha_3)}[f](\xi,q)  \end{align*}

\begin{align*} 
{}^{\theta}\mathfrak D_{r,a}^{\vec{\beta}}[g] (\xi, q) =    \mathfrak D_{r, a_1}^{(\beta_0,\beta_1)}[g](\xi,q) 
 +    \mathfrak D_{r, a_2}^{(\beta_2,\beta_3)}[g](\xi,q) i e^{i\theta}j  
\end{align*}
$$\sigma^{\theta}_{ q }:=\frac{1}{2} [d \bar  q _{[2]} \wedge d q  - ie^{i\theta}d  q _{[1]} \wedge d\bar  q  j],$$
\end{proof}

\begin{corollary} Let $\vec{\alpha}= (\alpha_0, \alpha_1,\alpha_2,\alpha_3), \vec{\beta}= (\beta_0, \beta_1,\beta_2,\beta_3) \in\mathbb C(\textsf{i})^4$ with $0< \Re\alpha_{\ell}, \Re\beta_{\ell}<1$ for $\ell=0,1,2,3$ and $f,g \in AC^1(\overline{J_a^b}, \mathbb C(i))$. Consider $\xi\in J_a^b$ such that the mappings $q\mapsto  \mathcal I_a^q [f](\xi,q, \vec{\alpha})$ and $q\mapsto \mathcal I_a^q [g](\xi,q, \vec{\beta})$ belong to $C^1(\overline{J_a^b}, \mathbb C(i)(\mathbb C(\textsf{i}))).$   

If $ \mathfrak D_{a_1}^{(\alpha_0,\alpha_1)}[f](\xi,\cdot) =  \mathfrak D_{r, a_1}^{(\beta_0,\beta_1)}[g](\xi,\cdot) =0$ on  $J_a^b$ then  
\begin{align*}  
\int_{\partial J_a^b}   \mathcal I_a^q [g](\xi,q, \vec{\beta})(d \bar q_{[2]} \wedge dq)\mathcal I_a^q [f](\xi,q, \vec{\alpha}) = 0.
\end{align*}

Similarly, if $\mathfrak D_{a_2}^{(\alpha_2,\alpha_3)}[f](\xi,\cdot) = \mathfrak D_{r, a_2}^{(\beta_2,\beta_3)}[g](\xi,\cdot)=0$ on $J_a^b$ then  
\begin{align*}  
- \int_{\partial J_a^b}   \mathcal I_a^q [g](\xi,q, \vec{\beta}) ie^{i\theta}(d  q _{[1]} \wedge d\bar  q ) \overline{  \mathcal I_a^q [f](\xi,q, \vec{\alpha})}= 0
\end{align*}
\end{corollary}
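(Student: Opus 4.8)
The plan is to derive the Corollary directly from the Stokes type integral formula stated in the immediately preceding Proposition, by specializing the hypotheses. First I would invoke that Proposition with the same data $\vec\alpha,\vec\beta$, $f,g$ and $\xi$: its first displayed identity reads
\[
\int_{\partial J_a^b}\mathcal I_a^q[g](\xi,q,\vec\beta)\,(d\bar q_{[2]}\wedge dq)\,\mathcal I_a^q[f](\xi,q,\vec\alpha)
= 2\int_{J_a^b}\Bigl(\mathcal I_a^q[g](\xi,q,\vec\beta)\,\mathfrak D_{a_1}^{(\alpha_0,\alpha_1)}[f](\xi,q)+\mathfrak D_{r,a_1}^{(\beta_0,\beta_1)}[g](\xi,q)\,\mathcal I_a^q[f](\xi,q,\vec\alpha)\Bigr)d\mu_q.
\]
Under the hypothesis $\mathfrak D_{a_1}^{(\alpha_0,\alpha_1)}[f](\xi,\cdot)\equiv 0$ and $\mathfrak D_{r,a_1}^{(\beta_0,\beta_1)}[g](\xi,\cdot)\equiv 0$ on $J_a^b$, both terms in the integrand of the right-hand side vanish identically, so the volume integral is zero and the boundary integral equals zero. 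This gives the first assertion.

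For the second assertion I would argue in exactly the same way, but starting from the second displayed identity of the Proposition, namely
\[
-\int_{\partial J_a^b}\mathcal I_a^q[g](\xi,q,\vec\beta)\,ie^{i\theta}(dq_{[1]}\wedge d\bar q)\,\overline{\mathcal I_a^q[f](\xi,q,\vec\alpha)}
= 2\int_{J_a^b}\Bigl(\mathcal I_a^q[g](\xi,q,\vec\beta)\,ie^{i\theta}\overline{\mathfrak D_{a_2}^{(\alpha_2,\alpha_3)}[f](\xi,q)}+\mathfrak D_{r,a_2}^{(\beta_2,\beta_3)}[g](\xi,q)\,ie^{i\theta}\overline{\mathcal I_a^q[f](\xi,q,\vec\alpha)}\Bigr)d\mu_q.
\]
Imposing $\mathfrak D_{a_2}^{(\alpha_2,\alpha_3)}[f](\xi,\cdot)\equiv 0$ forces $\overline{\mathfrak D_{a_2}^{(\alpha_2,\alpha_3)}[f](\xi,q)}=0$, and $\mathfrak D_{r,a_2}^{(\beta_2,\beta_3)}[g](\xi,\cdot)\equiv 0$ kills the remaining term; hence the right-hand side vanishes and the boundary integral is zero. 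The regularity assumptions on $q\mapsto\mathcal I_a^q[f](\xi,q,\vec\alpha)$ and $q\mapsto\mathcal I_a^q[g](\xi,q,\vec\beta)$ being in $C^1(\overline{J_a^b},\mathbb C(i)(\mathbb C(\textsf{i})))$ are precisely those needed to apply the Proposition, so nothing further is required.

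There is essentially no obstacle here: the Corollary is a routine specialization, and the only point requiring a word of care is that one cannot simply drop one summand while keeping the other — each of the two assertions needs \emph{both} of its stated vanishing hypotheses, because the Stokes formula couples a left-type term in $f$ with a right-type term in $g$. A secondary remark worth making in the writeup is that $\mathfrak D_{a_1}^{(\alpha_0,\alpha_1)}[f](\xi,\cdot)\equiv 0$ together with $\mathfrak D_{a_2}^{(\alpha_2,\alpha_3)}[f](\xi,\cdot)\equiv 0$ is, by the discussion preceding Proposition \ref{propFRACD1}, exactly the fractional Cauchy–Riemann system ${\bf D}_a^{\vec\alpha}[f(\xi,\cdot)]=(0,0)$, so the Corollary can be read as a Cauchy-type vanishing theorem for fractionally $\vec\alpha$-holomorphic functions of two complex variables; I would close with that interpretive sentence rather than any additional computation.
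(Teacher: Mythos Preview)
Your proposal is correct and matches the paper's approach exactly: the Corollary is stated in the paper without proof, as an immediate specialization of the preceding Stokes-type Proposition, and your argument carries out precisely that specialization. Your additional interpretive remarks go slightly beyond what the paper writes but are accurate and do no harm.
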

 
\begin{theorem} (Borel-Pompieu type formula induced by $ {\bf D}_{a}^{\vec{\alpha}}$ and $ {\bf D}_{r,a}^{\vec{\beta}}$) 
Let $\vec{\alpha}=(\alpha_0, \alpha_1, \alpha_2,\alpha_3),\vec{\beta}= (\beta_0, \beta_1, \beta_2,\beta_3)\in\mathbb C(\textsf{i})^4$ with $0< \Re\alpha_{\ell}, \Re\beta_{\ell}<1$ for $\ell=0,1,2,3$ and $f,g \in AC^1(\overline{J_a^b}, \mathbb C(i))$. Consider $\xi\in J_a^b$ such that the mappings $q\to \mathcal I_a^q [f](\xi,q,\vec{\alpha})$ and $q\to \mathcal I_a^q [g](\xi,q, \vec{\beta})$, for $q\in J_a^b$, belong to $C^1(\overline{J_a^b},\mathbb C(i)(\mathbb C(\textsf{i}))).$ Then 
\begin{align*}  &  
  \int_{\partial J_a^b } \left\{\frac{1}{2} [ \  K^{ \alpha_0, \alpha_1}_{a}(\tau,q) ( d \bar \tau_{[2]} \wedge d\tau ) - 
 K^{ \alpha_2, \alpha_3}_{a}(\tau,q)  ie^{-i\theta}\overline{(d \tau_{[1]} \wedge d\bar \tau) } \ ]
 \mathcal I_a^{\tau} [f](\xi,\tau, \vec{\alpha})  \right. \\
& \left.	\hspace{1cm}	+     \mathcal I_a^{\tau} [g](\xi,\tau,\vec{\beta}) \frac{1}{2} [\ ( d \bar \tau_{[2]} \wedge d\tau ) K^{ \beta_0, \beta_1}_{a}(\tau,q)  + ie^{i\theta}(d \tau_{[1]} \wedge d\bar \tau )\overline{ 
 K^{ \beta_2, \beta_3}_{a}(\tau,q) } \ ] \right\}  \\
		&		- 
\int_{J_a^b} \left(   K^{ \alpha_0, \alpha_1}_{a}(\tau,q)   \mathfrak D_{a_1}^{(\alpha_0,\alpha_1)}[f ] (\xi,\tau)  + 
 K^{ \alpha_2, \alpha_3}_{a}(\tau,q)     i e^{-i\theta}     \mathfrak D_{a_2}^{(\alpha_2,\alpha_3)}
	  [f](\xi,\tau) \right. \\
			&  \left. 
			\hspace{1.5cm}+
 \mathfrak D_{r, a_1}^{(\beta_0,\beta_1)}[g](\xi,\tau) K^{ \beta_0, \beta_1}_{a}(\tau,q) 
   -    \mathfrak D_{r, a_2}^{(\beta_2,\beta_3)}[g](\xi,\tau) i e^{i\theta} \overline{  K^{ \beta_2, \beta_3}_{a}(\tau,q) }
    		\right)  d\mu_{\tau}       \\
		=  &    \left\{ \begin{array}{ll}  (f+g) (\Re  z_1 + i\Im\zeta_1, \zeta_2) + 
		(f+g) (\Re  \zeta_1 + i\Im z_1, \zeta_2)  & \\
		+ 
		(f+g) (\zeta_1, \Re  z_2 + i\Im\zeta_2) + (f+g) (\zeta_1, \Re  \zeta_2 + i\Im z_2) & \\
+ N[f](\xi,q, \vec{\alpha}) + N[g](\xi,q, \vec{\beta}) , &    q\in 
		J_a^b ,  \\ 0 , &  q\in \mathbb C(i)^2\setminus\overline{J_a^b},                    
\end{array} \right. 
\end{align*} 
\end{theorem}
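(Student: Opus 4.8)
The plan is to deduce this final Borel--Pompeiu formula for $\mathbb C(i)$-valued holomorphic functions of two complex variables directly from the quaternionic version already established in Theorem \ref{B-P-F-D}, by specializing the quaternionic data $f,g$ to $\mathbb C(i)$-valued functions and then splitting every quaternionic identity into its $\mathbb C(i)$-components. First I would take $f,g \in AC^1(\overline{J_a^b},\mathbb C(i))$, regard them as $\mathbb H$-valued via the inclusion $\mathbb C(i)\hookrightarrow\mathbb H$, and apply Theorem \ref{B-P-F-D} verbatim; its hypotheses are exactly the present ones since $\mathcal I_a^q[f](\xi,\cdot)$ and $\mathcal I_a^q[g](\xi,\cdot)$ are assumed $C^1$ up to the closure. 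This immediately produces an identity of $\mathbb H(\mathbb C(\textsf{i}))$-valued quantities whose right-hand side already matches the claimed right-hand side (the translated evaluations $(f+g)(\ldots)$ plus the $N$-terms), because that right-hand side was derived in Theorem \ref{B-P-F-D} without any assumption beyond $f,g\in AC^1$.

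The substantive work is rewriting the left-hand side. I would start from the decompositions recorded just before this theorem and in Remark \ref{remark01}: $\mathfrak K^{\vec\alpha}_{\theta,a}(\tau,q)=K^{\alpha_0,\alpha_1}_a(\tau,q)+K^{\alpha_2,\alpha_3}_a(\tau,q)j$, $\sigma^\theta_\tau=\tfrac12[d\bar\tau_{[2]}\wedge d\tau - ie^{i\theta}d\tau_{[1]}\wedge d\bar\tau\, j]$, $\mathcal I_a^\tau[f](\xi,\tau,\vec\alpha)=\mathfrak I_{a_1}^{(\alpha_0,\alpha_1)}[f](\xi,\tau)+\mathfrak I_{a_2}^{(\alpha_2,\alpha_3)}[f](\xi,\tau)j$ (in the $\mathbb C(i)$-valued case the second slot involves $\overline{(\cdot)}$ after multiplication by $j$), together with the splittings of ${}^\theta\mathfrak D_a^{\vec\alpha}$ and ${}^\theta\mathfrak D_{r,a}^{\vec\beta}$ into their $\mathfrak D_{a_1}$, $\mathfrak D_{a_2}$ pieces from Definition/Remark above. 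Substituting these into the boundary integrand $\mathfrak K^{\vec\alpha}_{\theta,a}\sigma^\theta_\tau\mathcal I_a^\tau[f]$ and into $\mathcal I_a^\tau[g]\sigma^\theta_\tau\mathfrak K^{\vec\beta}_{\theta,a}$, and using the quaternion multiplication rule $aj=j\bar a$ for $a\in\mathbb C(i)$ to push all $j$'s to one side, the $j$-free part of each product collapses onto the $K^{\alpha_0,\alpha_1}_a(\tau,q)(d\bar\tau_{[2]}\wedge d\tau)\mathcal I_a^\tau[f]$ and the $K^{\alpha_2,\alpha_3}_a(\tau,q)ie^{-i\theta}\overline{(d\tau_{[1]}\wedge d\bar\tau)}\mathcal I_a^\tau[f]$ terms displayed in the statement. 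The same bookkeeping applied to the volume integrand, using that ${}^\theta\mathfrak D_a^{\vec\alpha}[f](\xi,\tau)$ has $\mathbb C(i)$-components $\mathfrak D_{a_1}^{(\alpha_0,\alpha_1)}[f]$ and $ie^{-i\theta}\mathfrak D_{a_2}^{(\alpha_2,\alpha_3)}[f]$ in the $\mathbb C(i)$-valued setting, reproduces the four-term volume integrand of the claim.

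Concretely, I would organize this as: (i) invoke Theorem \ref{B-P-F-D} for $\mathbb C(i)$-valued $f,g$; (ii) expand both integrands in the $\{1,j\}$-splitting of $\mathbb H(\mathbb C(\textsf{i}))$, keeping careful track of conjugations produced by $aj=j\bar a$; (iii) collect the scalar (i.e.\ $j$-free) parts — which is legitimate because the claimed identity is itself $\mathbb C(i)(\mathbb C(\textsf{i}))$-valued and both sides of the quaternionic identity are consistent; (iv) observe the right-hand side is unchanged. Alternatively, and perhaps more transparently, one can route the argument through the two Stokes-type identities in the Proposition immediately preceding this theorem together with Proposition \ref{propFRACD1}, mimicking how the quaternionic Borel--Pompeiu formula of Theorem \ref{B-P-F-D} was obtained from its Stokes counterpart; the proof would then read ``apply the previous Stokes-type formula with $g$ replaced by the components of $\mathfrak K^{\vec\beta}_{\theta,a}$ and use the reproducing properties of Proposition \ref{propFRACD1}, cf.\ \cite[Theorem 3.5, Remark 4]{BG1}.''

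The main obstacle is purely notational rather than conceptual: correctly tracking the complex conjugations and the factors $ie^{\pm i\theta}$ that appear when the quaternionic products $K+Kj$ times $d\bar\tau_{[2]}\wedge d\tau - ie^{i\theta}d\tau_{[1]}\wedge d\bar\tau\,j$ times $\mathfrak I_{a_1}+\mathfrak I_{a_2}j$ are multiplied out and the $j$'s commuted past $\mathbb C(i)$-scalars, and then matching term-by-term with the somewhat intricate right-hand side layout. Since no new analytic input is needed beyond Theorem \ref{B-P-F-D}, the decomposition identities above, and the algebra of $\mathbb H(\mathbb C(\textsf{i}))$, the proof is short: it suffices to cite \cite[Theorem 3.5]{BG1} (equivalently the quaternionic Theorem \ref{B-P-F-D}) and perform the $\{1,j\}$-component splitting.
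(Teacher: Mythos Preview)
Your approach is exactly the paper's: invoke Theorem \ref{B-P-F-D} for $\mathbb C(i)$-valued $f,g$ and then split the four quaternionic products $\mathfrak K^{\vec\alpha}_{\theta,a}\,\sigma^\theta_\tau$, $\sigma^\theta_\tau\,\mathfrak K^{\vec\beta}_{\theta,a}$, $\mathfrak K^{\vec\alpha}_{\theta,a}\,{}^\theta\mathfrak D_a^{\vec\alpha}[f]$ and ${}^\theta\mathfrak D_{r,a}^{\vec\beta}[g]\,\mathfrak K^{\vec\beta}_{\theta,a}$ into their $\{1,j\}$-components via $aj=j\bar a$, keeping the $j$-free parts. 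One small slip: your identity $\mathcal I_a^\tau[f]=\mathfrak I_{a_1}^{(\alpha_0,\alpha_1)}[f]+\mathfrak I_{a_2}^{(\alpha_2,\alpha_3)}[f]\,j$ conflates two different integrals --- for $\mathbb C(i)$-valued $f$ the operator $\mathcal I_a^\tau[f]$ is already $\mathbb C(i)(\mathbb C(\textsf{i}))$-valued, so no $j$-splitting of it is needed (only the kernel, the area form, and the fractional derivatives carry a $j$-part).
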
 
\begin{proof}
Use Theorem \ref{B-P-F-D} and the following identities:

\begin{align*} 
 \mathfrak K^{\vec{\alpha}}_{\theta, a}(\tau,q) {}^{\theta}   \mathfrak D_{a}^{\vec \alpha}[f](\xi,\tau)  = &   K^{ \alpha_0, \alpha_1}_{a}(\tau,q)   \mathfrak D_{a_1}^{(\alpha_0,\alpha_1)}[f ] (\xi,\tau)  + 
 K^{ \alpha_2, \alpha_3}_{a}(\tau,q)     i e^{-i\theta}     \mathfrak D_{a_2}^{(\alpha_2,\alpha_3)}
	  [f](\xi,\tau)  \\ 
		&  +   \left( K^{ \alpha_0, \alpha_1}_{a}(\tau,q)   i e^{i\theta}  \overline{  \mathfrak D_{a_2}^{(\alpha_2,\alpha_3)}
	  [f](\xi,\tau) }  +  K^{ \alpha_2, \alpha_3}_{a}(\tau,q) \overline{ \mathfrak D_{a_1}^{(\alpha_0,\alpha_1)}[f ] (\xi,\tau)}\right)j					
\end{align*}

\begin{align*} 
{}^{\theta}\mathfrak D_{r,a}^{\vec{\beta}}[g] (\xi, \tau)\mathfrak K^{\vec{\beta}}_{\theta, a}(\tau,q)  = & 
  \mathfrak D_{r, a_1}^{(\beta_0,\beta_1)}[g](\xi,\tau) K^{ \beta_0, \beta_1}_{a}(\tau,q) 
   -    \mathfrak D_{r, a_2}^{(\beta_2,\beta_3)}[g](\xi,\tau) i e^{i\theta} \overline{  K^{ \beta_2, \beta_3}_{a}(\tau,q) } \\
	& + \left(    \mathfrak D_{r, a_1}^{(\beta_0,\beta_1)}[g](\xi,\tau)  K^{ \beta_2, \beta_3}_{a}(\tau,q) +   \mathfrak D_{r, a_2}^{(\beta_2,\beta_3)}[g](\xi,\tau) i e^{i\theta} \overline{  K^{ \beta_2, \beta_3}_{a}(\tau,q)}   \right) j
\end{align*}

\begin{align*}	\mathfrak K^{\vec{\alpha}}_{\theta, a}(\tau,q) \sigma^{\theta}_{\tau} =  & 
 \frac{1}{2} [ \  K^{ \alpha_0, \alpha_1}_{a}(\tau,q) ( d \bar \tau_{[2]} \wedge d\tau ) - 
 K^{ \alpha_2, \alpha_3}_{a}(\tau,q)  ie^{-i\theta}\overline{(d \tau_{[1]} \wedge d\bar \tau) } \ ]
 \\
& + \frac{1}{2} [   - K^{ \alpha_0, \alpha_1}_{a}(\tau,q) ie^{i\theta}( d \tau_{[1]} \wedge d\bar \tau) +
   K^{ \alpha_2, \alpha_3}_{a}(\tau,q) \overline{ (d \bar \tau_{[2]} \wedge d\tau ) }   \  ]j  
\end{align*}

\begin{align*}
\sigma^{\theta}_{\tau}	\mathfrak K^{\vec{\beta}}_{\theta, a}(\tau,q) = & \frac{1}{2} [\ ( d \bar \tau_{[2]} \wedge d\tau ) K^{ \beta_0, \beta_1}_{a}(\tau,q)  + ie^{i\theta}(d \tau_{[1]} \wedge d\bar \tau )\overline{K^{ \beta_2, \beta_3}_{a}(\tau,q)} \ ] \\
& + \frac{1}{2} [\ - ie^{i\theta}(d \tau_{[1]} \wedge d\bar \tau) \overline{K^{\beta_0, \beta_1}_{a}(\tau,q)}+ (d \bar \tau_{[2]} \wedge d\tau) K^{\beta_2, \beta_3}_{a}(\tau,q) \ ] j 
\end{align*}
\end{proof}

\begin{corollary} 
Let $\vec{\alpha}= (\alpha_0, \alpha_1,\alpha_2,\alpha_3), \vec{\beta}= (\beta_0, \beta_1,\beta_2,\beta_3)\in\mathbb C(\textsf{i})^4$ with $0< \Re\alpha_{\ell}, \Re\beta_{\ell}<1$ for $\ell=0,1,2,3$ and $f,g \in AC^1(\overline{J_a^b}, \mathbb C(i))$. Let $\xi\in J_a^b$ such that the mappings $q\to \mathcal I_a^q [f](\xi,q,\vec{\alpha})$ and $q\to \mathcal I_a^q [g](\xi,q, \vec{\beta})$, for $q\in J_a^b$, belong to $C^1(\overline{J_a^b}, \mathbb C(i)(\mathbb C(\textsf{i}))).$ Then 

\begin{align*}  &  
  \int_{\partial J_a^b } \left\{\frac{1}{2} [ \  K^{ \alpha_0, \alpha_1}_{a}(\tau,\xi) ( d \bar \tau_{[2]} \wedge d\tau ) - 
 K^{ \alpha_2, \alpha_3}_{a}(\tau,\xi)  ie^{-i\theta}\overline{(d \tau_{[1]} \wedge d\bar \tau) } \ ]
 \mathcal I_a^{\tau} [f](\xi,\tau, \vec{\alpha})  \right. \\
& \left.	\hspace{1cm}	+     \mathcal I_a^{\tau} [g](\xi,\tau,\vec{\beta}) \frac{1}{2} [\ ( d \bar \tau_{[2]} \wedge d\tau ) K^{ \beta_0, \beta_1}_{a}(\tau,\xi)  + ie^{i\theta}(d \tau_{[1]} \wedge d\bar \tau )\overline{ 
 K^{ \beta_2, \beta_3}_{a}(\tau,\xi) } \ ] \right\}  \\
		&		- 
\int_{J_a^b} \left(   K^{ \alpha_0, \alpha_1}_{a}(\tau,\xi)   \mathfrak D_{a_1}^{(\alpha_0,\alpha_1)}[f ] (\xi,\tau)  + 
 K^{ \alpha_2, \alpha_3}_{a}(\tau,\xi)     i e^{-i\theta}     \mathfrak D_{a_2}^{(\alpha_2,\alpha_3)}
	  [f](\xi,\tau) \right. \\
			&  \left. 
			\hspace{1.5cm}+
 \mathfrak D_{r, a_1}^{(\beta_0,\beta_1)}[g](\xi,\tau) K^{ \beta_0, \beta_1}_{a}(\tau,\xi) 
   -    \mathfrak D_{r, a_2}^{(\beta_2,\beta_3)}[g](\xi,\tau) i e^{i\theta} \overline{  K^{ \beta_2, \beta_3}_{a}(\tau,\xi) }
    		\right)  d\mu_{\tau}       \\
		 		=  &      4(f+g) (\zeta_1, \zeta_2) 
	 + N[f](\zeta,\zeta , \vec{\alpha}) + N[g](\zeta,\zeta, \vec{\beta}) . 
\end{align*} 

On the other hand, if $$\mathfrak D_{a_1}^{(\alpha_0,\alpha_1)} [f ] (\xi,\cdot)= 
 \mathfrak D_{a_2}^{(\alpha_2,\alpha_3)}[ f  ] (\xi,\cdot) =
	\mathfrak D_{r,a_1}^{(\alpha_0,\alpha_1)} [g ] (\xi,\cdot) =
	\mathfrak D_{r,a_2}^{(\alpha_2,\alpha_3)}[ g  ] (\xi, \cdot) =0$$ on $J_a^b$, then   

\begin{align*}  &  
  \int_{\partial J_a^b } \left\{\frac{1}{2} [ \  K^{ \alpha_0, \alpha_1}_{a}(\tau,q) ( d \bar \tau_{[2]} \wedge d\tau ) - 
 K^{ \alpha_2, \alpha_3}_{a}(\tau,q)  ie^{-i\theta}\overline{(d \tau_{[1]} \wedge d\bar \tau) } \ ]
 \mathcal I_a^{\tau} [f](\xi,\tau, \vec{\alpha})  \right. \\
& \left.	\hspace{1cm}	+     \mathcal I_a^{\tau} [g](\xi,\tau,\vec{\beta}) \frac{1}{2} [\ ( d \bar \tau_{[2]} \wedge d\tau ) K^{ \beta_0, \beta_1}_{a}(\tau,q)  + ie^{i\theta}(d \tau_{[1]} \wedge d\bar \tau )\overline{ 
 K^{ \beta_2, \beta_3}_{a}(\tau,q) } \ ] \right\}  \\
		=  &    \left\{ \begin{array}{ll}  (f+g) (\Re  z_1 + i\Im\zeta_1, \zeta_2) + 
		(f+g) (\Re  \zeta_1 + i\Im z_1, \zeta_2)  & \\
		+ 
		(f+g) (\zeta_1, \Re  z_2 + i\Im\zeta_2) + (f+g) (\zeta_1, \Re  \zeta_2 + i\Im z_2) & \\
+ N[f](\xi,q, \vec{\alpha}) + N[g](\xi,q, \vec{\beta}) , &    q\in 
		J_a^b ,  \\ 0 , &  q\in \mathbb C(i)^2\setminus\overline{J_a^b},                    
\end{array} \right. 
\end{align*} 
Particularly, 
\begin{align*}  &  
  \int_{\partial J_a^b } \left\{\frac{1}{2} [ \  K^{ \alpha_0, \alpha_1}_{a}(\tau,\xi) ( d \bar \tau_{[2]} \wedge d\tau ) - 
 K^{ \alpha_2, \alpha_3}_{a}(\tau,\xi)  ie^{-i\theta}\overline{(d \tau_{[1]} \wedge d\bar \tau) } \ ]
 \mathcal I_a^{\tau} [f](\xi,\tau, \vec{\alpha})  \right. \\
& \left.	\hspace{1cm}	+     \mathcal I_a^{\tau} [g](\xi,\tau,\vec{\beta}) \frac{1}{2} [\ ( d \bar \tau_{[2]} \wedge d\tau ) K^{ \beta_0, \beta_1}_{a}(\tau,\xi)  + ie^{i\theta}(d \tau_{[1]} \wedge d\bar \tau )\overline{ 
 K^{ \beta_2, \beta_3}_{a}(\tau,\xi) } \ ] \right\}  \\
				=  &       4(f+g) (\xi,\xi) + N[f](\xi,\xi, \vec{\alpha}) + N[g](\xi,\xi, \vec{\beta}) .
\end{align*} 
\end{corollary}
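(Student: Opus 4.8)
The plan is to obtain every assertion of the Corollary by specializing the preceding Borel--Pompieu type formula, so I would begin by recalling that that formula is asserted for \emph{every} $q\in J_a^b$ (with the branch-wise right-hand side) under the standing hypotheses on $f,g\in AC^1(\overline{J_a^b},\mathbb C(i))$ and on the mappings $q\mapsto\mathcal I_a^q[f](\xi,q,\vec\alpha)$ and $q\mapsto\mathcal I_a^q[g](\xi,q,\vec\beta)$. For the first displayed identity I would simply put $q=\xi$, i.e.\ $z_k=\zeta_k$ for $k=1,2$; since $\xi\in J_a^b$ we land in the ``interior'' branch. The key observation is that under $z_k=\zeta_k$ each of the four shifted evaluation points $(\Re z_1+i\Im\zeta_1,\zeta_2)$, $(\Re\zeta_1+i\Im z_1,\zeta_2)$, $(\zeta_1,\Re z_2+i\Im\zeta_2)$, $(\zeta_1,\Re\zeta_2+i\Im z_2)$ collapses to $(\zeta_1,\zeta_2)$, so the four copies of $f+g$ on the right add up to $4(f+g)(\zeta_1,\zeta_2)$, while $N[f](\xi,q,\vec\alpha)\mid_{q=\xi}=N[f](\zeta,\zeta,\vec\alpha)$ and likewise for $g$; the left-hand side is literally the one in the Theorem with $q$ replaced by $\xi$, which is the asserted expression.

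For the second part I would exploit the explicit shape of the volume integral in the Theorem: its integrand is a linear combination of the four fractional operators $\mathfrak D_{a_1}^{(\alpha_0,\alpha_1)}[f](\xi,\cdot)$, $\mathfrak D_{a_2}^{(\alpha_2,\alpha_3)}[f](\xi,\cdot)$, $\mathfrak D_{r,a_1}^{(\beta_0,\beta_1)}[g](\xi,\cdot)$, $\mathfrak D_{r,a_2}^{(\beta_2,\beta_3)}[g](\xi,\cdot)$, each multiplied on one side by one of the kernels $K^{\alpha_0,\alpha_1}_a$, $K^{\alpha_2,\alpha_3}_a$, $K^{\beta_0,\beta_1}_a$, $\overline{K^{\beta_2,\beta_3}_a}$. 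Hence, under the hypothesis that all four of these operators (evaluated at the fixed $\xi$) vanish identically on $J_a^b$, that volume integral is identically zero and the Theorem collapses to the asserted identity equating the boundary integral with the branch-wise right-hand side; this is in the same spirit as passing from the quaternionic Borel--Pompeiu formula \eqref{ecua4} to a Cauchy-type formula in the classical setting.

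Finally I would combine the two reductions: start from the volume-integral-free identity just obtained, specialize it to $q=\xi$, and invoke once more the collapse of the four shifted arguments to $(\zeta_1,\zeta_2)$; the right-hand side then reads $4(f+g)(\xi,\xi)+N[f](\xi,\xi,\vec\alpha)+N[g](\xi,\xi,\vec\beta)$, which is the last displayed formula.

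The main (and essentially only) obstacle here is a point of rigour rather than of computation: one must be sure that the weakly singular volume integral in the Theorem still makes sense, and that the identity persists, when $q$ is moved to the interior point $\xi\in J_a^b$. I would dispose of this by relying directly on the Theorem itself, which already asserts the formula for all $q\in J_a^b$ and whose proof (via \cite[Theorem 3.5]{BG1} and \cite[Remark 4]{BG1}) establishes the four-dimensional integrability of the fractional kernels $K^{\alpha_0,\alpha_1}_a(\cdot,\xi)$ and $K^{\alpha_2,\alpha_3}_a(\cdot,\xi)$; beyond this, everything reduces to bookkeeping of the four collapsed evaluation points.
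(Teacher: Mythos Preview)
Your approach is correct and matches the paper's: the Corollary is stated without proof because it follows immediately from the preceding Theorem by (i) specializing $q=\xi$ so that the four shifted evaluation points collapse to $(\zeta_1,\zeta_2)$, and (ii) dropping the volume integral under the vanishing hypothesis on the four fractional operators. Your final paragraph on integrability is a fair remark, but since the Theorem is already asserted for all $q\in J_a^b$ there is nothing additional to justify here.
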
 
\section*{Declarations}
\subsection*{Funding} Instituto Polit\'ecnico Nacional (grant number SIP20211188) and CONACYT.
\subsection*{Conflict of interest} The authors declare that they have no conflict of interest regarding the publication of this paper.
\subsection*{Author contributions} Both authors contributed equally to the manuscript and typed, read, and approved the final form of the manuscript, which is the result of an intensive collaboration.
\subsection*{Availability of data and material} Not applicable
\subsection*{Code availability} Not applicable

\end{document}